\definecolor{darkblue}{rgb}{0,0,0.6}
\newcommand{\FF}{\mathbb{F}}
\newcommand{\ZZ}{\mathbb{Z}}
\newcommand{\QQ}{\mathbb{Q}}
\newcommand{\fn}{{\mathfrak{n}}}
\newcommand{\fm}{{\mathfrak{m}}}
\newcommand{\fq}{{\mathfrak{q}}}
\newcommand{\pos}[2][k]{#1\llbracket{#2}\rrbracket}
\newcommand{\xra}{\xrightarrow}
\newcommand{\FreeCovers}{\mathcal{F}}
\newcommand{\filt}[1]{\Phi^{#1}}
\newcommand{\wt}{\widehat}
\DeclareMathOperator{\id}{id}
\DeclareMathOperator{\edim}{edim}
\DeclareMathOperator{\gr}{gr}
\DeclareMathOperator{\Hom}{Hom}
\DeclareMathOperator{\Ann}{Ann}
\DeclareMathOperator{\Tor}{Tor}
\DeclareMathOperator{\aut}{aut}
\DeclareMathOperator{\im}{im}
\DeclareMathOperator{\rank}{rank}
\DeclareMathOperator{\ord}{ord}
\DeclareMathOperator{\depth}{depth}
\numberwithin{equation}{section}
\newtheorem{theorem}[equation]{Theorem}
\newtheorem{corollary}[equation]{Corollary}
\newtheorem{lemma}[equation]{Lemma}
\newtheorem{proposition}[equation]{Proposition}
\theoremstyle{definition}
\newtheorem{example}[equation]{Example}
\newtheorem{definition}[equation]{Definition}
\newtheorem{remark}[equation]{Remark}
\newtheoremstyle{TheoremNum}
    {}{}       
    {\itshape}                 
        {}                              
        {\bfseries}                     
        {.}                             
        { }                             
        {\thmname{#1}\thmnote{ \bfseries #3}}
\theoremstyle{TheoremNum}
\title[Automorphisms of the Koszul homology of a local ring]{Automorphisms of the Koszul homology\\ of a local ring}
\author{Srikanth B.~Iyengar}
\author{Henrik Rüping}
\author{Marc Stephan}
\date{\today}
\address{Department of Mathematics, University of Utah, Salt Lake City, UT 84112, U.S.A.}
\email{iyengar@math.utah.edu}
\address{Continentale Krankenversicherung a.G., Ruhrallee 94, 44139 Dortmund, Germany}
\email{henrikrueping@googlemail.com}
\address{Faculty of Mathematics, Bielefeld University, PO Box 100 131, 33501 Bielefeld, Germany}
\email{marc.stephan@math.uni-bielefeld.de}
\begin{document}
\keywords{automorphism of local ring, Koszul complex, Koszul homology}
\subjclass[2020]{Primary 13D02; Secondary 13A50}

\thanks{Partly supported by NSF grant DMS-200985 (SBI)}

\begin{abstract}
This work concerns the Koszul complex $K$ of a commutative noetherian local ring $R$, with its natural structure as differential graded $R$-algebra. It is proved that under diverse conditions, involving the multiplicative structure of $H(K)$, any dg $R$-algebra automorphism of $K$ induces the identity map on $H(K)$. In such cases, it is possible to define an action of the automorphism group of $R$ on $H(K)$. On the other hand, numerous rings are described for which $K$ has automorphisms that do not induce the identity on $H(K)$. For any $R$, it is shown that the group of automorphisms of $H(K)$ induced by automorphisms of $K$ is abelian.
\end{abstract}

\maketitle

\section{Introduction}
\label{sec:introduction}
This paper concerns automorphisms of the Koszul homology of a local ring induced by automorphisms of its Koszul complex. To set the stage for the discussion, we begin with a (commutative, noetherian) local ring $R$ and the Koszul complex $K$ on a minimal generating set for the maximal ideal of $R$, viewed as a differential graded $R$-algebra. The homology of $K$, denoted $H(K)$, has a structure of a graded-commutative $k$-algebra, where $k$ is the residue field of $R$. It is by know well understood that the structure of $K$ and of $H(K)$ capture interesting information about the ring $R$. So it is natural to study dg $R$-algebra automorphisms of $K$, and the automorphisms of $H(K)$ induced by them. But in fact we were led to study these because of recent work~\cite{ruepingstephan2019} of the second and third authors on transformation groups, answering a question of Walker and the first author~\cite{iyengarwalker2017}. In \cite{ruepingstephan2019} the problem arose of lifting automorphisms of the ring $R$ to those of $H(K)$; it was proved that this is possible when $R$ is the group algebra over $\FF_p$ of an elementary abelian $p$-group. As explained in \cref{rem:automorphisms}, the obstructions to lifting are precisely automorphisms of the dg $R$-algebra $K$ that are nontrivial on $H(K)$.

The present work started with the discovery that there are other interesting families of local rings $R$ with this property, but also families of local rings with nontrivial actions on $H(K)$. Here is a summary of our findings. The integer $c$ is the embedding codepth of $R$, namely $\edim R - \depth R$.

\begin{theorem}
\label{thm:listofThings} 
Let $R$ be a noetherian local ring and $K$ the Koszul complex on a minimal generating set for the maximal ideal of $R$.
Set $c=\sup\{ i \mid H_i(K)\ne 0\}$. Let $\varphi$ be an automorphism of the dg $R$-algebra $K$. The following statements hold.
\begin{enumerate}
\item\label{item:extremaldegrees}
$H_1(\varphi)$ and $H_c(\varphi)$ are the identity maps.
\item\label{item:ci}
$H(\varphi)$ is the identity when $R$ is a complete intersection. 
\item\label{item:gorenstein}
If $R$ is Gorenstein and $H_i(\varphi)$ is the identity, so is $H_{c-i}(\varphi)$.
\item\label{item:trivialproduct}
$H(\varphi)$ is the identity when $H_1(K)\cdot H_{i\geq 1}(K)=0$.
 \end{enumerate}
\end{theorem}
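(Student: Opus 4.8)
Throughout I would use two elementary structural facts about $\varphi$. Being an $R$-linear, unital morphism of the exterior $R$-algebra $K=\bigwedge_R K_1$, the map $\varphi$ has $\varphi_0=\id$ on $K_0=R$ and is determined by $\varphi_1\colon K_1\to K_1$, with $\varphi_i=\bigwedge^i\varphi_1$. Moreover $d\varphi_1=d$ forces $z_j:=\varphi_1(e_j)-e_j$ to be a cycle in $K_1$, and since the $x_j=d(e_j)$ minimally generate $\fm$ one has $Z_1(K)\subseteq\fm K_1$; hence $\varphi\equiv\id$ modulo $\fm K$. Finally $H(\varphi)$ is a graded automorphism of the $k$-algebra $H(K)$, and $\fm$ annihilates $H(K)$ since $H_0(K)=R/\fm=k$.

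\cref{item:extremaldegrees}: For $z\in Z_1(K)$ write $z=\sum_j m_je_j$ with $m_j\in\fm$; then $\varphi_1(z)-z=\sum_j m_jz_j\in\fm Z_1(K)\subseteq B_1(K)$, so $H_1(\varphi)=\id$. For $H_c(\varphi)=\id$, treat first the Artinian case $\depth R=0$, where $c=\edim R=:n$: here $H_n(K)=(0:_R\fm)\cdot\omega$ with $\omega=e_1\wedge\cdots\wedge e_n$, and $\varphi_n(\omega)=\det(\varphi_1)\,\omega$, where $\det(\varphi_1)$ is a unit congruent to $1$ modulo $\fm$ and so acts as the identity on $(0:_R\fm)\cdot\omega$. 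For general $R$, after the harmless faithfully flat extension that makes the residue field infinite one may choose the minimal generating set so that $x_1,\dots,x_d$ is a maximal $R$-regular sequence, $d=\depth R$; then the canonical surjection of dg algebras $K\to K^{R/(x_1,\dots,x_d)}$ is a quasi-isomorphism onto the Koszul complex of an Artinian ring with the same $c$. I expect this reduction to be the main obstacle of the theorem: $\varphi$ need not preserve the factorization $K=K(x_1,\dots,x_d)\otimes_R K(x_{d+1},\dots,x_n)$, so it does not descend along this quasi-isomorphism, and one must instead represent the classes of $H_c(K)\cong(0:_{R/(x_1,\dots,x_d)}\fm)\cdot\bar\omega$ by explicit cycles of $K$ and check directly, by a somewhat delicate but elementary computation, that $\varphi$ fixes them modulo boundaries—the mechanism being that the ring coefficients occurring lie in $(x_1,\dots,x_d)$, and multiples of $x_1,\dots,x_d$ become boundaries in $K$ after correcting by terms of higher homological degree in the $K(x_1,\dots,x_d)$-factor, which drives an induction.

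\cref{item:trivialproduct}: Put $\delta_1=\varphi_1-\id$, so $\delta_1(e_j)=z_j\in Z_1(K)\cap\fm K_1$. Expanding $\varphi_i(z)=\bigwedge^i\varphi_1(z)$ for a cycle $z=\sum_J a_Je_J\in Z_i(K)$ gives
\[
\varphi_i(z)=z+\sum_{\emptyset\ne S}\pm\,\iota_S(z)\wedge z_S ,
\]
the sum over nonempty $S\subseteq\{1,\dots,n\}$, where $\iota_S$ is the iterated contraction against the basis covectors indexed by $S$ and $z_S=\bigwedge_{s\in S}z_s$. Contractions anticommute with $d$, so each $\iota_S(z)$ is again a cycle. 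If $\lvert S\rvert\geq 2$, then $z_S$ has a sub-wedge $z_s\wedge z_{s'}$ with $[z_s][z_{s'}]\in H_1(K)\cdot H_1(K)\subseteq H_1(K)\cdot H_{\geq 1}(K)=0$; hence $z_s\wedge z_{s'}$ is a boundary, and so (the remaining factors being cycles) is $z_S$, whence the cycle $\iota_S(z)\wedge z_S$ is a boundary. If $\lvert S\rvert=1$ and $i\geq 2$, then $[\iota_S(z)]\cdot[z_S]\in H_{i-1}(K)\cdot H_1(K)=0$, so again $\iota_S(z)\wedge z_S$ is a boundary. The remaining case $\lvert S\rvert=1$, $i=1$ is the computation already used for \cref{item:extremaldegrees}. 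Thus $\varphi_i(z)-z\in B_i(K)$ for all $i$, i.e.\ $H(\varphi)=\id$.

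\cref{item:ci,item:gorenstein}: These follow formally from \cref{item:extremaldegrees} together with classical structure results for $H(K)$. If $R$ is a complete intersection, then $H(K)$ is the exterior algebra over $k$ on $H_1(K)$ (Tate, Assmus), so the graded algebra automorphism $H(\varphi)$ is the identity because $H_1(\varphi)$ is. If $R$ is Gorenstein, then $H(K)$ is a Poincaré duality algebra of formal dimension $c$ (Avramov–Golod): $H_c(K)\cong k$ and the multiplications $H_i(K)\times H_{c-i}(K)\to H_c(K)$ are perfect pairings. Since $H(\varphi)$ is multiplicative and $H_c(\varphi)=\id$ by \cref{item:extremaldegrees}, these pairings are preserved; hence if $H_i(\varphi)=\id$ then $\langle x,\,H_{c-i}(\varphi)(y)-y\rangle=0$ for all $x\in H_i(K)$ and all $y\in H_{c-i}(K)$, and nondegeneracy forces $H_{c-i}(\varphi)=\id$.
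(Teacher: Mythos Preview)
Your arguments for $H_1(\varphi)=\id$, for parts \eqref{item:ci} and \eqref{item:gorenstein}, are correct and essentially identical to the paper's. Your treatment of part \eqref{item:trivialproduct} via the contraction expansion $\varphi(z)=\sum_{S}\pm\,\iota_S(z)\wedge z_S$ is a valid alternative to the paper's route, which instead reduces to the case where $\varphi_1-\id$ is supported on a single basis vector $e_i$, writes a cycle as $e_i\cdot r+r'$ with $r,r'$ in the sub-dg-algebra on the remaining $e_j$, observes that $r$ is itself a cycle, and computes the difference as $\delta(e_i)\cdot r$. Your expansion gives finer information (it isolates the contributions from all $|S|$ at once) at the cost of some sign bookkeeping you sweep under the rug; the paper's version avoids those signs but needs a telescoping over the basis vectors to handle general $\varphi$. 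Both are fine.

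The genuine gap is in the $H_c$ half of \eqref{item:extremaldegrees}. You correctly diagnose the obstacle: $\varphi$ need not descend along the quasi-isomorphism $K\to K^{R/(x_1,\dots,x_d)}$, so the Artinian reduction does not go through directly. But the proposed fix---representing classes by explicit cycles and running an unspecified induction on correction terms---is not a proof; it is not clear what quantity you induct on, nor why the correction process terminates with a boundary after applying $\varphi$. The paper's idea is to avoid descent altogether by proving a stronger statement: for any finitely generated $R$-module $M$, the map $H_c(\varphi\otimes_R\id_M)$ on $H_c(K\otimes_R M)$ is the identity, where $c=\edim R-\depth_R M$. This is proved by induction on $\depth_R M$. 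The base case $\depth_R M=0$ is exactly your determinant argument (now with socle in $M$ rather than in $R$). For the induction step, pick $x$ regular on $M$; the short exact sequence $0\to M\xrightarrow{x}M\to\overline M\to 0$, together with $\fm\cdot H(K\otimes M)=0$, yields short exact sequences in Koszul homology, and in particular an isomorphism $H_{c+1}(K\otimes\overline M)\xrightarrow{\cong}H_c(K\otimes M)$ that is visibly compatible with $\varphi\otimes\id$. Since $\depth_R\overline M=\depth_R M-1$, the induction hypothesis gives $H_{c+1}(\varphi\otimes\id_{\overline M})=\id$, hence $H_c(\varphi\otimes\id_M)=\id$. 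The point is that $\varphi\otimes\id_M$ is always well defined---no descent is required---so the problem you flagged simply evaporates once one works with coefficients. Taking $M=R$ recovers the desired statement.
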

Part (\ref{item:extremaldegrees}) is contained in \cref{lem:H1} and \cref{lem:codepth}; part (\ref{item:ci}) is \cref{cor:ci}; for (\ref{item:gorenstein}) see \cref{lem:gorenstein}; and for (\ref{item:trivialproduct}) see \cref{cor:trivial-product}.

It follows from \cref{thm:listofThings}(\ref{item:extremaldegrees}) that when the embedding codepth of $R$ is at most $2$, the map $H(\varphi)$ is always the identity. The hypothesis in \cref{thm:listofThings}(\ref{item:trivialproduct}) holds when the ring $R$ is Golod, but not only; see the discussion following \cref{cor:trivial-product} and \cref{ex:destefani}. We provide numerous examples that suggest that the results above are the best possible. For instance, there are local rings of embedding codepth $3$ with nontrivial $H(\varphi)$; see \cref{ex:aci}. Also, it follows from \cref{thm:listofThings}(\ref{item:extremaldegrees}) and \cref{thm:listofThings}(\ref{item:gorenstein}) above that $H(\varphi)$ is always the identity when $R$ is a Gorenstein ring of embedding codepth $3$, but this does not extend to embedding codepth $4$; see \cref{ex:gorenstein}. 

On the other hand, for any $R$ and $\varphi$, the map $\varphi$ induces the identity on the associated graded of $H(K)$; see \cref{cor:graded}. These results seem to suggest that it is difficult to characterize rings with the property that every automorphism of the dg algebra $K$ induces the identity map on $H(K)$. 

To quantify the failure of this property, we consider the group, say $A$, of automorphisms of the $k$-algebra $H(K)$ induced by automorphisms of the dg $R$-algebra $K$. The automorphisms of $K$ themselves form a group under composition, and that is typically far from abelian. The result below, contained in \cref{thm:isgrouphomomorphism}, came as something of a surprise to us.

\begin{theorem}\label{thm:structureOfA}
The group $A$ of induced automorphisms on $H(K)$ is abelian. When the residue field of $R$ is of positive characteristic $p$, then $A$ is either trivial or has exponent $p$.
\end{theorem}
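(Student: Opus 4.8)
The plan is to realise the group $A$ as a quotient of an explicit abelian group. First I would make the group $G$ of automorphisms of the dg $R$-algebra $K$ completely explicit. Write $K=\Lambda_R\langle e_1,\dots,e_n\rangle$ with $d(e_i)=x_i$, where $x_1,\dots,x_n$ is the chosen minimal generating set of $\fm$. An automorphism $\varphi$ of the dg $R$-algebra $K$ is determined by the matrix $g=(g_{ij})\in M_n(R)$ with $\varphi(e_i)=\sum_j g_{ij}e_j$, and conversely every $g$ gives a dg $R$-algebra endomorphism. Compatibility with $d$ forces $\sum_j g_{ij}x_j=x_i$, i.e.\ $g\mathbf x=\mathbf x$ — equivalently, each row of $g-I$ is a $1$-cycle of $K$ — whereupon minimality of $x_1,\dots,x_n$ forces every entry of $g-I$ into $\fm$, so that $g$ is automatically invertible. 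Thus $\varphi\mapsto g$ identifies $G$ with the matrix group $\{g\in\mathrm{GL}_n(R):g\mathbf x=\mathbf x\}\subseteq I+M_n(\fm)$.

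Next I would introduce the key homomorphism. Reduction modulo $\fm^2$ gives $\theta\colon G\to M_n(\fm/\fm^2)$, $\varphi\mapsto\overline{g-I}$; since $g'g-I\equiv(g-I)+(g'-I)\pmod{M_n(\fm^2)}$ for $g,g'\in I+M_n(\fm)$, the map $\theta$ is a group homomorphism into an abelian group. Its kernel is $G_2:=\{\varphi\in G:g\equiv I\pmod{\fm^2}\}$. Consequently $[G,G]\subseteq G_2$, and when $\operatorname{char}k=p$ the group $\theta(G)$, being a subgroup of a $k$-vector space, has exponent dividing $p$, so $\varphi^p\in G_2$ for every $\varphi\in G$ (concretely, $p\in\fm$ and $p\mid\binom pj$ for $0<j<p$ give $g^p\equiv I\pmod{\fm^2}$).

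With this in hand the theorem reduces to one statement, which I will call the \emph{Key Lemma}: every $\varphi\in G_2$ induces the identity on $H(K)$. Granting it, the canonical surjection $G\twoheadrightarrow A$, $\varphi\mapsto H(\varphi)$, kills $G_2=\ker\theta$ and therefore factors through $G/G_2\cong\theta(G)$; hence $A$ is a quotient of the abelian group $\theta(G)$, so $A$ is abelian, and when $\operatorname{char}k=p$ it is a quotient of a group of exponent dividing $p$, hence trivial or of exponent $p$. (Equivalently, the inclusions $[G,G]\subseteq G_2$ and $\varphi^p\in G_2$ yield commutativity and the exponent bound directly.)

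The hard part will be the Key Lemma. I would expand $\varphi$ by the number of occurrences of $B:=g-I$: write $\varphi=\sum_{j\ge0}\psi_j$, where $\psi_j$ is the part of $\varphi$ that is $j$-linear in $B$, so $\psi_0=\id$, $\psi_1=\partial_B$ is the $R$-linear derivation of $K$ with $\partial_B(e_i)=\sum_j b_{ij}e_j$, and $\psi_j(K)\subseteq\fm^{j}K$, indeed $\psi_j(K)\subseteq\fm^{2j}K$ when $\varphi\in G_2$. Since $d$ involves no occurrence of $B$, matching $B$-degrees in $d\varphi=\varphi d$ shows each $\psi_j$ is a chain map, and matching them in $\varphi(ab)=\varphi(a)\varphi(b)$ gives $\psi_j(ab)=\sum_{l+m=j}\psi_l(a)\psi_m(b)$. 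One then needs $H(\psi_j)=0$ for every $j\ge1$. My plan is to induct on homological degree: the cases of $H_0$ and $H_1$ come from \cref{thm:listofThings}\,(\ref{item:extremaldegrees}), using that $\psi_j|_{K_1}=0$ for $j\ge2$ and that $\partial_B$ is evaluated directly on $Z_1(K)$; and in each higher degree the product formula — once the lower degrees are settled — forces $H(\psi_j)$ to kill all decomposable classes, reducing everything to its behaviour on the indecomposable part of $H_p(K)$. That last point is the real obstacle: since $H(K)$ need not be generated as a $k$-algebra by $H_1(K)$, vanishing on $H_1(K)$ does not propagate on its own, and I expect one must bring in the extremal-degree symmetry, the associated-graded statement \cref{cor:graded}, and the identification $H(K)\cong\Tor^P(R,k)$ for a minimal presentation $R=P/I$ over a regular local ring $P$ to control the indecomposable classes in intermediate degrees.
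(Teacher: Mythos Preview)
Your reduction is clean, but the Key Lemma is false, so the argument cannot be completed along these lines. Here is a counterexample. Let $k$ be any field, set
\[
R = k[a,b,c,d]\big/\bigl((a^3) + (b,c,d)^2\bigr),
\]
and take $K=\Lambda_R\langle e_a,e_b,e_c,e_d\rangle$ with $d(e_a)=a$, etc. Define $\varphi$ by $e_b\mapsto e_b+a^2e_a$ and $e_i\mapsto e_i$ for $i\neq b$. Then $g-I$ has the single nonzero entry $a^2\in\fm^2$, so $\varphi\in G_2$. The element $c\,e_be_c$ is a cycle (as $bc=c^2=0$), and
\[
\varphi(c\,e_be_c)=c\,e_be_c+a^2c\,e_ae_c.
\]
One checks directly that $a^2c\,e_ae_c$ is a cycle and is \emph{not} a boundary: the $e_ae_c$--coefficient of any element of $d(K_3)$ lies in $bR+dR$, which does not contain $a^2c$. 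Hence $H_2(\varphi)\neq\id$. The phenomenon is exactly what you flagged as the obstacle: $[a^2e_a]$ is a nonzero class in $H_1(K)$ represented by a cycle with coefficients in $\fm^2$, and its product with $[ce_c]$ is nonzero; no amount of appeal to the associated-graded statement or to $\Tor$ will rescue this, because those tools are compatible with $\varphi$ and do not distinguish it from the identity any more finely than $H(K)$ itself.

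The paper's argument replaces your map $\theta$ with a different target. Rather than reducing the entries of $g-I$ modulo $\fm^2$, one views each \emph{row} of $g-I$ as a cycle in $K_1$ and reduces modulo boundaries, obtaining a map
\[
G \longrightarrow \Hom_R(F,H_1(K)), \qquad \varphi \mapsto \bigl(e_i\mapsto [\text{$i$th row of }g-I]\bigr).
\]
The analogue of your Key Lemma is: if each row of $g-I$ is a \emph{boundary}, then $H(\varphi)=\id$; this is proved by writing down an explicit chain homotopy (replace the boundary $\delta(e_i)=ds$ by $s$ in degree two). The homomorphism property uses that the cross term $\delta'\circ\delta$ has image in $\fm\cdot\ker(f)$, which lies in $B_1(K)$ because $\fm\cdot H_1(K)=0$; thus $(\id+\delta')\circ(\id+\delta)$ and $\id+\delta+\delta'$ induce the same map on homology. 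The target $\Hom_R(F,H_1(K))$ is a $k$-vector space, giving the abelian and exponent-$p$ conclusions at once. Note that this kernel (rows are boundaries) and your $G_2$ (entries in $\fm^2$) are incomparable in general: in the example above $a^2e_a$ lies in $\fm^2K_1$ but is not a boundary, while conversely $d(e_ae_b)=ae_b-be_a$ is a boundary whose coefficients are not in $\fm^2$.
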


The exponent property is a feature of rings of positive characteristic, for in \cref{ex:smllWithQ} we describe a ring $R$ for which $A$ contains elements of infinite order.

Our goal in this work is to study actions on Koszul homology induced by automorphisms of the ring, or of the Koszul complex viewed as a dg algebra. The issues that arise will be more transparent when cast in a broader framework, which is what is done in \cref{sec:framework}. Many of our results are proved in this generality. 

\section{A framework}\label{sec:framework}
In what follows we use some basic notions and constructions pertaining to dg modules over dg algebras; \cite{avramov1998} is our reference for this material.
Throughout we write ``$(R,\fm,k)$ is a local ring" to mean that $R$ is a noetherian local ring, with maximal ideal $\fm$, and residue field $k$. A \emph{local homomorphism} $\alpha\colon R\to (S,\fn)$ is a map of local rings such that $\alpha(\fm)\subseteq \mathfrak{n}$. It induces a map on residue fields $k \to S/\fn$.

\begin{definition}
\label{de:categoryC}
Consider the category, denoted $\FreeCovers$, with objects the maps $f\colon F\to R$ where $(R,\fm,k)$ is a local ring, $F$ is a finite free $R$-module and $f$ is an $R$-linear map satisfying the conditions
\[
\im(f)=\fm \quad\text{and}\quad \ker(f)\subseteq \fm F\,.
\]
Given that $\im(f)=\fm$, the second condition is equivalent to the condition that the surjection $f\colon F\to \fm$ induces an isomorphism $F/\fm F \to \fm/\fm^2$; this is by Nakayama's Lemma. Thus $f$ is a \emph{free cover} of $\fm$. A morphism in $\FreeCovers$ is a pair 
\[
(\phi,\alpha)\colon (F\xra{f} R)\longrightarrow (G\xra{g}S)
\]
where $\alpha\colon R\to S$ is a local homomorphism, $\phi$ is a $\alpha$-equivariant map (i.e.~a map of abelian groups with $\phi(rx)=\alpha(r)\phi(x)$ for all $r\in R$, $x\in F$), and the following diagram commutes
\[
\begin{tikzcd}
F \arrow{d}[swap]{f} \arrow{r}{\phi} & G\arrow{d}{g}\\
R \arrow{r}[swap]{\alpha} & S
\end{tikzcd}.
\]
It will be convenient to speak of $\phi$ as a \emph{lift} of $\alpha$. Since $\alpha(\fm)\subseteq \mathfrak{n}$, lifts exist but there are typically many.

We are mainly interested in the automorphisms in $\FreeCovers$, namely, maps 
\[
(\phi,\alpha)\colon (F\to R) \longrightarrow (F\to R)
\]
where $\alpha$ is an automorphism of $R$; this implies that $\phi$ is an isomorphism. Then we can focus on the case $\alpha=\id_R$.
\end{definition}

\begin{definition}
\label{de:Koszul-functor}
Given a map $f\colon F\to R$ as above, we write $K(f)$ for its Koszul complex viewed as a dg $R$-algebra in the usual way; $K(f)$ is an exterior algebra on $F$ with differential induced by $f$; see \cite[\S1.6]{brunsherzog1998}. Thus, $K(f)$ is strictly graded-commutative (in that, the square of an element of odd degree is zero), and hence so is its homology algebra that we denote $H(f)$. The notation, adopted from \cite[Definition~1.6.3]{brunsherzog1998}, is potentially confusing, for it is not the homology of $f$, but the context should dispel any confusion. 

Any morphism $(\phi,\alpha)$ in $\FreeCovers$ extends to a $\alpha$-equivariant morphism of dg algebras
\[
K(\phi,\alpha)\colon K(f) \longrightarrow K(g)\,.
\]
This defines a functor $K$ from the category $\FreeCovers$ to the category of graded commutative dg algebras over local rings and morphisms given by pairs of a local ring homomorphism $\alpha$ and a $\alpha$-equivariant map of dgas. The functor $K$ is fully faithful: it is faithful since $K_0(\phi,\alpha)=\alpha$, $K_1(\phi,\alpha)=\phi$, and it is full since $K(f)$ is an exterior algebra on $K_1(f)=F$.

The morphism $K(\phi,\alpha)$ induces a map of graded rings
\[
H(\phi,\alpha)\colon H(f) \longrightarrow H(g)\,.
\]
Observe that $H(f)$ is an algebra over the residue field of $R$, whereas $H(g)$ is an algebra over the residue field of $S$. The map above is equivariant with respect to the map on the residue fields induced by $\alpha$.
\end{definition}

The case $\alpha=\id_R$ provides the setting in the introduction. Since $K$ is fully faithful, every dga-endomorphism $\varphi$ of $K(f)$ over $R$ is determined by the lift $\varphi_1\colon K_1(f)\to K_1(f)$ of $\id_R$; and $\varphi$ is automatically an automorphism since $\id_R$ is an isomorphism.

\begin{remark}
\label{rem:automorphisms}
Let $(R,\fm,k)$ be a local ring and $f\colon F\to R$ a free cover of $\fm$. By functoriality, the automorphism group $\aut_{\FreeCovers}(f)$ acts on the Koszul homology $H(f)$ via graded $\ZZ$-algebra automorphisms:
\[\aut_{\FreeCovers}(f)\longrightarrow \aut(H(f)), \quad (\phi,\alpha)\mapsto H(\phi, \alpha).
\]
In the next section and beyond we encounter various contexts where $H(\phi,\alpha)$ is independent of the lift $\phi$. The result below is the main reason for our interest in such independence. In the statement, $\aut(R)$ denotes the group of automorphisms of the ring $R$. We wish to define a compatible $\aut(R)$ action on the Koszul homology, $H_i(f)$, for a given $i$. Compatibility means that the following diagram commutes:
\[
\begin{tikzcd}
\aut_{\FreeCovers}(f)\arrow{r}\arrow[twoheadrightarrow]{d}&\aut_\ZZ(H_i(f))\\
\aut(R)\arrow[dashrightarrow]{ru}
\end{tikzcd}
\]
The surjective map sends an element $(\phi,\alpha)$ to $\alpha$. 

\begin{proposition}
\label{pro:automorphisms}
Let $f\colon F\to R$ be a free cover of $\fm$. If for some integer $i$ each automorphism of the dg $R$-algebra $K(f)$ induces the identity map on $H_i(f)$, then the $\aut_\FreeCovers(f)$-action on $H_i(f)$ descends to an $\aut(R)$-action.
\end{proposition}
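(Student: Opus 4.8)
The plan is to recognize that this is a formal consequence of the universal property of a quotient group, with all the genuine content residing in the hypothesis. Write $\pi\colon \aut_\FreeCovers(f)\to\aut(R)$ for the surjection $(\phi,\alpha)\mapsto\alpha$ of \cref{rem:automorphisms}, and $\rho\colon\aut_\FreeCovers(f)\to\aut_\ZZ(H_i(f))$ for the action $(\phi,\alpha)\mapsto H_i(\phi,\alpha)$. What has to be produced is a homomorphism $\bar\rho\colon\aut(R)\to\aut_\ZZ(H_i(f))$ with $\bar\rho\circ\pi=\rho$; this is precisely the commutativity of the triangle in \cref{rem:automorphisms}, and it is exactly what it means for the $\aut_\FreeCovers(f)$-action on $H_i(f)$ to descend to an $\aut(R)$-action.

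First I would identify the kernel of $\pi$. An element of $\ker\pi$ is a pair $(\phi,\id_R)$, that is, a morphism $f\to f$ in $\FreeCovers$ whose first component is an $R$-linear lift of $\id_R$; since $\id_R$ is an isomorphism, $\phi$ is one too, by the discussion in \cref{de:categoryC}. Under the fully faithful functor $K$ of \cref{de:Koszul-functor} these pairs correspond bijectively, and compatibly with composition, to the dg $R$-algebra automorphisms of $K(f)$. Thus the hypothesis — that for the given $i$ each automorphism of the dg $R$-algebra $K(f)$ induces the identity on $H_i(f)$ — says exactly that $\rho$ is trivial on $\ker\pi$.

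It then follows from the universal property of $\aut(R)=\aut_\FreeCovers(f)/\ker\pi$ that $\rho$ factors uniquely through $\pi$ as $\rho=\bar\rho\circ\pi$, so $\bar\rho$ is the sought $\aut(R)$-action and the triangle commutes by construction. The only point beyond this bookkeeping is the surjectivity of $\pi$, which I would handle as follows: given $\alpha\in\aut(R)$, fix a basis $e_1,\dots,e_n$ of $F$; since $\alpha$ restricts to an automorphism of $\fm$, the elements $\alpha(f(e_1)),\dots,\alpha(f(e_n))$ again minimally generate $\fm$, so we may pick $e_j'\in F$ with $f(e_j')=\alpha(f(e_j))$; the $\alpha$-equivariant map $\phi$ with $\phi(e_j)=e_j'$ satisfies $f\phi=\alpha f$, hence is a lift of $\alpha$, and is an isomorphism because $\alpha$ is, so $(\phi,\alpha)\in\aut_\FreeCovers(f)$ lies over $\alpha$. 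I do not expect any real obstacle here: the substance of the paper lies in verifying the hypothesis in specific situations, as in \cref{thm:listofThings}, not in this descent argument.
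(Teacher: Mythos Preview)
Your proof is correct and follows essentially the same route as the paper: identify $\ker\pi$ with the dg $R$-algebra automorphisms of $K(f)$, invoke the hypothesis to see that $\rho$ kills $\ker\pi$, and factor through the quotient. You supply more detail than the paper does (the explicit universal-property formulation and the verification of surjectivity of $\pi$, which the paper simply asserts in \cref{rem:automorphisms} via the existence of lifts from \cref{de:categoryC}), but the argument is the same.
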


\begin{proof}
 We need to show that every element $(\phi,\alpha)$ in the kernel of the surjection $\aut_{\FreeCovers}(f)\to\aut(R)$ induces the identity on $H_i(f)$. This holds since the kernel consists of the pairs $(\phi,\id_R)$ and $K(\phi,\id_R)$ is an automorphism of the dg $R$-algebra $K(f)$ so that $H_i(\phi,\alpha)$ is the identity by assumption.
\end{proof}

 If the $\aut_\FreeCovers(f)$-action on $H_i(f)$ descends to an $\aut(R)$-action, then the subgroup of ring automorphisms that induce the identity on $k=R/\fm$ acts via automorphisms of $k$-vector spaces.
\end{remark}

Next we recall an identification of cycles in degree one in Koszul complexes; see \cite[Section~2.3]{brunsherzog1998} for details. In what follows we typically describe $f$ by specifying a matrix; it is assumed that a basis for $F$ has been chosen, and this basis is denoted $e_1,\ldots,e_n$.

\begin{remark}
\label{rem:degree-one-cycles}
Suppose that $\pi\colon Q\to R$ is a minimal Cohen presentation; thus, $(Q,\fq,k)$ is a regular local ring, and $\pi$ is surjective with $\ker(\pi)\subseteq \fq^2$. Such a presentation exists when $R$ is complete with respect to the topology induced by its maximal ideal. Fix minimal generating sets $x_1,\dots,x_n$, and $q_1,\dots,q_c$, for the ideals $\fq$ and $\ker(\pi)$, respectively. The minimality of the Cohen presentation means that there exist elements $\{a_{ij}\}$ in $\fq$, for $1\le i\le c$ and $1\le j\le n$ such that
\[
q_i = \sum_j a_{ij}x_j \quad \text{for each $i$.} 
\]
Let $\overline{x}_1,\dots,\overline{x}_n$ be the images in $R$ of the elements $x_1,\dots,x_n$; they are a minimal generating set for the ideal $\fm$, again by the minimality of $\pi$. Set 
\[
f\colon F\xra{ \begin{pmatrix} \overline{x}_1 & \dots& \overline{x}_n \end{pmatrix} } R\,.
\]
Then $f$ is an object in $\FreeCovers$. Set
\[
z_i \coloneqq \sum_j \overline{a}_{ij}e_j \quad \text{for $1\le i\le c$.} 
\]
It is readily checked that these elements are in $\ker(f)$. Hence they are cycles in $K_1(f)$. In fact they form a basis for the homology in degree one:
\[
H_1(f) = \bigoplus_i k[z_i]\,.
\]
Indeed, it is easy to check that classes $\{[z_i]\}_{i=1}^c$ are linearly independent. It remains to observe that the $k$-vector space has rank $c$. This fact can be deduced from the isomorphism of graded $k$-vector spaces
\[
H(f) \cong \Tor^Q(R,k)\,.
\]
In degree one the isomorphism above reads
\[
H_1(f) \cong \frac{I}{\fq I} \qquad\text{where $I=\ker(\pi)$.}
\]
Thus $\rank_k H_1(f)$ is the minimal number of generators of the ideal $I$, that is to say, $c$. One thinks of the vector space $I/\fq I$ as the space of relations defining $R$.
\end{remark}

\begin{remark}\label{rem:examplespolynomialquotients}
In the following sections we present numerous examples to illustrate the scope and limitations of our results. The rings $R$ that appear will typically be quotients of standard graded polynomial rings, by a homogeneous ideal. This makes it possible to use computer algebra systems (our choice is Macaulay 2~\cite{M2}) for calculations; see \cref{rem:M2}. When $R$ is standard graded, the only homogeneous automorphism of $F\to R$ that lifts the identity on $R$ is the identity map on $F$. Therefore, the lifts we consider will not be homogeneous. One can get examples of local rings from the graded ones, by localising at the homogeneous maximal ideal, or completing at that ideal.

In detail, say $k$ is a field and $P\coloneqq k[x_1,\dots,x_n]$ the polynomial ring in indeterminates $\boldsymbol{x} \coloneqq x_1,\dots,x_n$, viewed as a standard graded ring, that is to say, with each $x_i$ of degree one. Let $R=P/I$ where $I$ is homogeneous ideal in $(\boldsymbol{x})^2$, and let $\fm$ be the maximal ideal $(\boldsymbol{x})R$ of $R$. When $\dim R=0$, equivalently, when $\fm$ is nilpotent, the ring $R$ is local already. Else we can pass to the local ring $R_\fm$, or its completion at $\fm$. Our results involve the structure of the Koszul homology ring, and the Koszul complex of $R$ on the sequence $\boldsymbol{x}$ is quasi-isomorphic to the Koszul complex of $R_\fm$, and also to the Koszul complex of its completion, even as dg algebras.

The Koszul complex $K$ of $R$ on the sequence $\boldsymbol{x}$ acquires an internal grading coming from the grading on $R$, so its homology is bigraded. It is helpful to display the ranks of this bigraded object in a table whose entry in the $i$th column and $j$th row is the rank of $H_i(K)_{i+j}$, the component in degree $(i+j)$ of the $i$th Koszul homology; see \cref{ex:smllWithQ}. This is the \emph{Betti table} of $R$, and it records the ranks of the free modules (with twists) in the minimal free resolution of $R$ over $P$; see \cite[\S1B]{eisenbud2005}. The index of the last nonzero column in the Betti table is the projective dimension of $R$ over $P$, whereas the index of the last nonzero row is its regularity. 
\end{remark}

\section{Extremal degrees}
\label{sec:extremaldegrees}
Throughout this section, we fix a morphism
\[
(\phi,\alpha)\colon (F\xra{f} R)\longrightarrow (G\xra{g}S)
\]
in $\FreeCovers$. The issue that concerns us is the dependence of the map $H(\phi,\alpha)$ on $\phi$. We will show that independence always holds in degree one and if $\alpha$ is an isomorphism, in the largest degree $i$ with $H_i(f)\neq 0$.

\begin{remark}\label{rem:reduceisotoidentity}
Let $\alpha\colon R\to S$ be an isomorphism. The map $H_i(\phi,\alpha)$ is independent of the lift $\phi$ of $\alpha$ if and only if any lift of $\id_R$ induces the identity on $H_i(f)$.

Indeed, suppose $H_i(\phi,\alpha)$ is independent of $\phi$. Given any lift $\psi$ of $\id_R$ the map $\phi\circ\psi$ is also a lift of $\alpha$, so the hypothesis gives the second equality below:
\[
H_i(\phi,\alpha)\circ H_i(\psi,\id_R) = H_i(\phi\circ\psi,\alpha) = H_i(\phi,\alpha)\,.
\]
Since $H_i(\phi,\alpha)$ is an isomorphism, $H_i(\psi,\id_R)$ is the identity map, as claimed.

Conversely, say $\phi$ and $\phi'$ are lifts of $\alpha$. If each lift of $\id_R$ induces the identity on $H_i(f)$, then since $\phi^{-1}\circ \phi'$ is a lift of $\id_R$, one has
\[
\id=H_i((\phi^{-1},\alpha^{-1}) \circ(\phi',\alpha)) =H_i(\phi,\alpha)^{-1}\circ H_i(\phi',\alpha)
\]
 and hence $H_i(\phi,\alpha)=H_i(\phi',\alpha)$.
\end{remark}

\begin{lemma}
\label{lem:H1}
If $\phi$ and $\psi$ are lifts of a local homomorphism $\alpha$, then 
\[
H_1(\phi,\alpha)=H_1(\psi,\alpha)\,.
\]
Consequently $H(\phi,\alpha)=H(\psi,\alpha)$ on the subalgebra of $H(f)$ generated by $H_1(f)$.
\end{lemma}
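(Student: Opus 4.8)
The plan is to examine the difference of the two lifts. Set $\delta \coloneqq \phi - \psi \colon F \to G$. Since $\phi$ and $\psi$ are $\alpha$-equivariant homomorphisms of abelian groups, so is $\delta$, and from $g\phi = \alpha f = g\psi$ one gets $g\delta = 0$; that is, $\delta$ maps $F$ into $\ker(g)$, the module of degree-one cycles of $K(g)$. Hence, to prove $H_1(\phi,\alpha) = H_1(\psi,\alpha)$, it is enough to show that $\delta$ carries every degree-one cycle of $K(f)$ to a boundary of $K(g)$.

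The key step combines the two defining properties of a free cover with the locality of $\alpha$. Let $z \in \ker(f)$ be a cycle in $K_1(f)$. Since $f$ is a free cover, $\ker(f) \subseteq \fm F$, so $z = \sum_j r_j e_j$ with every $r_j \in \fm$; applying the additive, $\alpha$-equivariant map $\delta$ gives $\delta(z) = \sum_j \alpha(r_j)\,\delta(e_j)$. By the first paragraph each $\delta(e_j)$ lies in $\ker(g)$, and each $\alpha(r_j)$ lies in $\fn$ because $\alpha$ is local, so $\delta(z) \in \fn\cdot\ker(g)$. It then remains to observe that $\fn\cdot\ker(g)$ consists of boundaries of $K(g)$: the quotient $H_1(g)$ of $\ker(g)$ by the boundaries is a module over $H_0(g) = S/\fn$, hence annihilated by $\fn$; concretely, for $y \in \ker(g)$ and $w \in G$ the element $w \wedge y$ of $K_2(g)$ has differential $g(w)\,y$, while $\fn = \im(g)$. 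Thus $\delta(z)$ is a boundary and $[\phi(z)] = [\psi(z)]$ in $H_1(g)$, which gives the first assertion.

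For the ``consequently'' clause, observe that $H(\phi,\alpha)$ and $H(\psi,\alpha)$ are both morphisms of graded rings $H(f) \to H(g)$, equivariant over the residue field map induced by $\alpha$; in particular they agree on $H_0(f) = k$, and by the first part they agree on $H_1(f)$. Since a ring homomorphism is determined on any subalgebra by its restriction to a generating set, the two maps coincide on the subalgebra of $H(f)$ generated by $H_1(f)$ --- a subalgebra that contains $H_0(f)$.

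I do not expect a genuine obstacle here; the whole argument is a one-step homotopy estimate. The only points that need care are the inclusion of $\fn\cdot\ker(g)$ into the degree-one boundaries of $K(g)$ --- i.e.\ seeing that a degree-one cycle times a generator of $\fn$ is an explicit boundary --- and the bookkeeping remark that the difference of two $\alpha$-equivariant maps of abelian groups is again $\alpha$-equivariant, so nothing is lost in passing to $\delta$.
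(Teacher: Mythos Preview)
Your proof is correct and follows essentially the same route as the paper: both show that the difference $\phi-\psi$ lands in $\ker(g)$, use $\ker(f)\subseteq \fm F$ and $\alpha$-equivariance to see that a degree-one cycle is sent into $\fn\cdot\ker(g)$, and then invoke $\fn\cdot H_1(g)=0$ to conclude this lies in the boundaries. Your explicit formula $d(w\wedge y)=g(w)\,y$ for $y\in\ker(g)$ is exactly the content of the paper's citation of \cite[Proposition~1.6.5(b)]{brunsherzog1998}, so the arguments are the same in substance.
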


\begin{proof}
For any element $x\in F$, the difference
\[\phi(x)-\psi(x)
\]
lies in $\ker g$, i.e., it is a cycle in $K_1(g)$.

Since $\ker f\subseteq \fm F$, a cycle $z\in K_1(f)$ is of the form 
\[
z=\sum_i r_ix_i \quad\text{with $r_i\in \fm$ and $x_i\in K_1(f)$}.
\]
Therefore we get that
\[
\phi(z) - \psi(z) = \sum_i \alpha(r_i) (\phi(x_i)-\psi(x_i))\,.
\]

We have already observed that the $\phi(x_i)-\psi(x_i)$ are cycles. Since $\alpha(r_i)$ is in $\mathfrak{n}$ and $\mathfrak{n} H(g)=0$ (see \cite[Proposition~1.6.5 (b)]{brunsherzog1998}) it follows that each $\alpha(r_i) (\phi(x_i)-\psi(x_i))$ is a boundary and hence so is their sum. We conclude that $\phi(z)-\psi(z)$ is a boundary in $K(g)$, which is the desired result.
\end{proof}

\begin{remark}
\label{rem:cirings}
Fix an object $f\colon F\to R$ in $\FreeCovers$. Since $H(f)$ is graded-commutative, there is a map of $k$-algebras
\[
\bigwedge_k \Sigma H_1(f) \longrightarrow H(f)
\]
This map is an isomorphism if, and only if, the ring $R$ is complete intersection; see \cite[Theorem~2.3.11]{brunsherzog1998}.
\end{remark}

The preceding remark and \cref{lem:H1} have the following consequence. The result seems optimal for it does not extend to Gorenstein rings or to almost complete intersections; see \cref{ex:aci} and \cref{ex:gorenstein}.

\begin{corollary} 
\label{cor:ci}
Let $(\phi,\alpha)$ be a morphism in $\FreeCovers$, with source $f\colon F\to R$. When $R$ is complete intersection the map $H(\phi,\alpha)$ depends only on $\alpha$. In particular, any $R$-algebra automorphism $\varphi\colon K(f)\to K(f)$ induces the identity on $H(f)$.
\end{corollary}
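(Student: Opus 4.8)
The plan is to chain together the two statements that immediately precede the corollary. By \cref{rem:cirings}, when $R$ is complete intersection the canonical $k$-algebra map $\bigwedge_k \Sigma H_1(f) \to H(f)$ is an isomorphism; in particular $H(f)$ is generated as a $k$-algebra by its degree-one component $H_1(f)$. On the other hand, \cref{lem:H1} tells us that if $\phi$ and $\psi$ are two lifts of the same local homomorphism $\alpha$, then $H(\phi,\alpha)$ and $H(\psi,\alpha)$ agree on $H_1(f)$, and hence on the subalgebra of $H(f)$ generated by $H_1(f)$. Combining the two: that subalgebra is all of $H(f)$, so $H(\phi,\alpha)=H(\psi,\alpha)$, which is exactly the assertion that $H(\phi,\alpha)$ depends only on $\alpha$ and not on the choice of lift.

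For the final sentence I would specialize to $\alpha=\id_R$. An $R$-algebra automorphism $\varphi$ of $K(f)$ is $K(\phi,\id_R)$ for the lift $\phi=\varphi_1\colon F\to F$ of $\id_R$, by the full faithfulness of the functor $K$ noted after \cref{de:Koszul-functor}. Since $\id_F$ is also a lift of $\id_R$ and $K(\id_F,\id_R)=\id_{K(f)}$ induces the identity on $H(f)$, the first part of the corollary gives $H(\varphi)=H(\phi,\id_R)=H(\id_F,\id_R)=\id_{H(f)}$.

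I do not anticipate a genuine obstacle: both ingredients are already in place, and the only move is the elementary observation that a $k$-algebra homomorphism is determined by its restriction to a generating set. The one point worth a remark is that \cref{rem:cirings} — and hence the generation of $H(f)$ in degree one — is available for an arbitrary complete intersection local ring via \cite[Theorem~2.3.11]{brunsherzog1998}, so no graded or completeness hypothesis on $R$ is needed here.
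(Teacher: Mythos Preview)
Your argument is correct and matches the paper's proof essentially verbatim: invoke \cref{rem:cirings} to conclude that $H(f)$ is generated by $H_1(f)$ when $R$ is complete intersection, then apply the second sentence of \cref{lem:H1}. Your treatment of the ``in particular'' clause via the full faithfulness of $K$ is more explicit than the paper's (which leaves it implicit), but the logic is identical.
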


\begin{proof}
Since $R$ is a complete intersection, $H(f)$ is an exterior algebra in $H_1(f)$; see \cref{rem:cirings}. Thus the subalgebra generated by $H_1(f)$ is the whole of $H(f)$. 
\end{proof}

With regards to the preceding result, it should be stressed that although for a fixed $\alpha\colon R\to S$, the various maps $(\phi,\alpha)$ induce the same map on homology, they need not be homotopic, even when $\alpha$ is the identity; see \cref{ex:not-hequivalent}.

\begin{example}
Let $k$ be a field and $R\coloneqq \pos{x_1,\dots,x_n}/(x_1^{d_1},\dots, x_c^{d_c})$, where the $d_i\ge 2$ are integers. Let $f\colon F\to R$ be the map from \cref{rem:degree-one-cycles}. Then the cycles
\[
[x_1^{d_1-1}e_1], \cdots, [x_c^{d_c-1}e_c]
\]
are a basis for the $k$-vector space $H_1(f)$.

Let $\alpha\colon R\to R$ be a morphism of $k$-algebras, defined by
\[
\alpha(x_i) = \sum_{j}r_{ij} x_j \qquad \text{for $i=1,\dots,n$.}
\]

For any lift $\phi$ of $f\colon F\to R$, the endomorphism $H(\phi,\alpha)$ of $H(f)$ is given by
\[
[x_i^{d_i-1}e_i] \mapsto [\sum_{j} \alpha(x_i)^{d_i-1}r_{ij}e_j] 
\]
By \cref{cor:ci}, this map is independent of the choice of the lift.
\end{example}

The next example is a special case of the last one, but there is a new feature.

\begin{example}
Let $p$ be a prime number and $k=\FF_p$, the field with $p$ elements. In the polynomial ring $k[x_1,\dots,x_n]$ consider the ideals 
\[
\fq \coloneqq (x_1,\dots,x_n) \qquad\text{and}\qquad I \coloneqq (x_1^p,\dots,x_n^p)\,.
\]
Set $R\coloneqq k[x_1,\dots,x_n]/I$; the group algebra of an elementary abelian $p$-group of rank $n$ has this form. The ring $R$ is local with maximal ideal $\fm \coloneqq \fq/I$. Let $f\colon F\to R$ be a free cover of $\fm$; in particular, $H_1(f) \cong I/\fq I$, as explained in \cref{rem:degree-one-cycles}.

One can relate $H_1(f)$ to the space of generators $\fq/\fq^2$ of $R$, as follows: The Frobenius endomorphism $\varphi$ of $k[x_1,\dots,x_n]$ induces a $k$-linear isomorphism between the space of generators and the space of relations of $R$:
\[
\frac{\fq}{\fq^2} \xra[\cong]{\ \varphi\ } \frac{I}{\fq I}\,.
\]

Let now $\alpha\colon R\to R$ be a $k$-algebra map. This can be lifted to a $k$-algebra map $\wt\alpha$ of $ k[x_1,\dots,x_n]$ such that $\wt\alpha(I)\subseteq I$. With $\phi$ any lift of $\alpha$, one then gets a commutative diagram
\[
\begin{tikzcd}
\frac{\fq}{\fq^2} \arrow{d}[swap]{\alpha} \arrow{r}{\varphi}[swap]{\cong} 
  & \frac{I}{\fq I} \arrow{d}{\wt\alpha} \arrow{r}[swap]{\cong} & H_1(f) \arrow{d}{H_1(\phi,\alpha)} \\
\frac{\fq}{\fq^2} \arrow{r}{\varphi}[swap]{\cong} & \frac{I}{\fq I} \arrow{r}[swap]{\cong} & H_1(f) \\
\end{tikzcd}.
\]
This diagram reconciles the description of the action of $\alpha$ on $H_1(f)$ with the one given in \cite[Proposition~8.8]{ruepingstephan2019}.
\end{example}

See the paragraph following \cref{cor:ci} for the import of the example below. When $k=\FF_2$, the ring $R$ in question is the group algebra, over $k$, of the elementary abelian $2$-group of rank two.

\begin{example}
\label{ex:not-hequivalent}
Let $k$ be a field and set 
\[
R\coloneqq \frac{k[x_1,x_2]}{(x_1^2,x_2^2)}.
\]
Let $F$ be the free $R$-module on basis $e_1,e_2$ and $f\colon F\xra{(x_1 \; x_2)}R$ a free cover of the maximal ideal $(x_1,x_2)$ of $R$. Consider the morphism of dg $R$-algebras
\[
\varphi\colon K(f) \longrightarrow K(f)\,, \quad e_1\mapsto e_1+x_2 e_2, \quad e_2\mapsto e_2.
\]
Then $H(\varphi)$ is the identity, by \cref{cor:ci}; this can be checked directly. However $\varphi$ is not chain homotopic to the identity, as morphisms of $R$-complexes. For example, if it were, so would the induced map 
\[
S\otimes_R\varphi \colon S\otimes_R K(f)\longrightarrow S\otimes_R K(f) \quad \text{where $S\coloneqq R/x_1R$.}
\]
However the $k$-vector space $H_1(S\otimes_R K(f))$ has a basis $[e_1],[x_2e_2]$, and the map induced by $S\otimes_R\varphi$ sends $[e_1]$ to $[e_1] + [x_2e_2]$.
\end{example}

The ring $R$ in the next example complements \cref{cor:ci}, for it is an almost complete intersection for which there are automorphisms of the Koszul complex inducing non-identity maps on the homology.

\begin{example}
\label{ex:aci}
Let $R$ be the ring $\pos[\FF_2]{t^6,t^{10},t^{14},t^{15}}\subset \pos[\FF_2]{t}$ and consider the free cover of its maximal ideal $
f\colon F\to R$ represented by the matrix $\begin{pmatrix} t^6 & t^{10} & t^{14} & t^{15} \end{pmatrix}$.

The Koszul complex of $f$ admits an automorphism that does not induce the identity on $H_2(f)$.
It fixes $e_i$ for $i\ge 2$ and sends $e_1$ to $e_1+t^{16}e_3+t^{15}e_4$.
Thus the class
\[
[(t^{18}e_2+t^{14}e_3)e_1+t^{10}e_2e_3]
\]
is sent to itself plus $[t^{18}e_2+t^{14}e_3]\cdot [t^{16}e_3+t^{15}e_4]$. We checked by hand and verified with Macaulay2 that this product is not zero in $H_2(f)$.

Since $\dim R=1$ the highest nonzero Koszul homology module is $H_3(f)$, and on this module any automorphism induces the identity map; see \cref{lem:codepth}.
\end{example}

Next we examine how different lifts of $\alpha$ can be chosen.

\begin{remark}
\label{rem:actiononlifts}
It is easy to check that if $\delta\colon F\to \ker(g)$ is a $\alpha$-equivariant map, then $(\phi+\delta,\alpha)$ is also a morphism in $\FreeCovers$. This defines an action of the abelian group $\hom_\alpha(F,\ker(g))$ of $\alpha$-equivariant maps on the set of lifts of $\alpha$. This group action is free. It is also transitive, since the difference of two lifts lands in $\ker(g)$.
\end{remark}

\begin{lemma}\label{lem:embeddingDim}
Let $n=\rank_R F$. If $\alpha\colon R\to S$ is a local homomorphism such that $\alpha(\fm)=\mathfrak{n}$, then $H_n(\phi,\alpha)$ is independent of the chosen lift $\phi$.
\end{lemma}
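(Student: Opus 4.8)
The plan is to exploit that $K_n(f)=\bigwedge^n F$ is free of rank one, so that comparing two lifts in the top degree reduces to comparing a single scalar. Fix the basis $e_1,\dots,e_n$ of $F$ and set $\omega\coloneqq e_1\wedge\dots\wedge e_n$, a free generator of $K_n(f)$; then every element of $K_n(f)$ is $r\omega$ for a unique $r\in R$, and a routine computation with the Koszul differential shows that $r\omega$ is a cycle exactly when $r\fm=0$. Since $\rank_R F=n$ we have $K_{n+1}(f)=0$, so $H_n(f)$ is precisely the module of these cycles. It therefore suffices to prove that for any two lifts $\phi,\psi$ of $\alpha$, the chain maps $K_n(\phi,\alpha)$ and $K_n(\psi,\alpha)$ agree on every cycle $r\omega$.

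First I would record that $\delta\coloneqq\phi-\psi$ is an $\alpha$-equivariant map $F\to G$ with image contained in $\ker(g)$, hence in $\fn G$ because $g$ is a free cover (cf.\ \cref{rem:actiononlifts}). The telescoping expansion of a top exterior power then gives
\[
K_n(\phi,\alpha)(\omega)-K_n(\psi,\alpha)(\omega)=\sum_{i=1}^n \phi(e_1)\wedge\dots\wedge\phi(e_{i-1})\wedge\delta(e_i)\wedge\psi(e_{i+1})\wedge\dots\wedge\psi(e_n),
\]
and each summand carries the wedge factor $\delta(e_i)\in\fn G$, so the whole difference lies in $\fn K_n(g)$. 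On the other hand, if $r\fm=0$ then, using the $\alpha$-equivariance of $K(\phi,\alpha)$ and the hypothesis $\alpha(\fm)=\fn$, one gets $\alpha(r)\fn=\alpha(r)\alpha(\fm)=\alpha(r\fm)=0$, so $\alpha(r)$ annihilates $\fn G$ and in particular $\fn K_n(g)$. Since $K_n(\phi,\alpha)(r\omega)-K_n(\psi,\alpha)(r\omega)=\alpha(r)\bigl(K_n(\phi,\alpha)(\omega)-K_n(\psi,\alpha)(\omega)\bigr)$, combining the two observations shows this difference vanishes, and therefore $H_n(\phi,\alpha)=H_n(\psi,\alpha)$.

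The one point that requires care is that the argument produces \emph{literal} equality of the two representative cycles, not merely equality up to a boundary, and that is exactly where the full strength $\alpha(\fm)=\fn$ is used (as opposed to the containment $\alpha(\fm)\subseteq\fn$ available for an arbitrary local homomorphism), since it is what forces $\alpha(r)\in\Ann_S(\fn)$. There is nothing to check when $\rank_S G<n$, because then $K_n(g)=0$; but no case distinction is actually needed, since the computation above goes through verbatim in that case.
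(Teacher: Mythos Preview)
Your argument is correct and essentially identical to the paper's: both identify $H_n(f)$ with $\Ann(\fm)\cdot\omega$, use that the difference of the lifts lands in $\ker(g)\subseteq\fn G$, and then apply $\alpha(\Ann(\fm))\subseteq\Ann(\fn)$ (from $\alpha(\fm)=\fn$) to kill every term containing a $\delta$-factor on the level of cycles. The only cosmetic difference is that the paper expands $(\phi+\delta)(e_1)\wedge\dots\wedge(\phi+\delta)(e_n)$ multilinearly while you use the telescoping identity, but the substance is the same.
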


\begin{proof} Let $e_1,\ldots, e_n$ be a basis of $F$. Thus $K_n(f)$ is a free $R$-module of rank one, generated by $e_1\wedge\ldots\wedge e_n$. It follows from the construction of the Koszul complex that $r e_1\wedge\ldots\wedge e_n$ is a cycle, if and only if $r$ is in $\Ann(\fm)$, the annihilator of the maximal ideal $\fm$; this is the socle of $R$. Since $K_{n+1}(f)=0$, we get that $H_n(f)$ is $\Ann(\fm)(e_1\wedge \ldots \wedge e_n)$.

We want to show that $H_n(\phi+\delta,\alpha)=H_n(\phi,\alpha)$ for any $\delta$ in $\hom_\alpha(F,\ker(g))$.

By assumption, $\alpha(\fm)= \mathfrak{n}$ and thus $\alpha(\Ann(\fm))\subseteq \Ann(\mathfrak{n})$. Since the element $\delta(e_i)$ is in $\ker(g)\subseteq \mathfrak{n}G$, one has $\alpha(r)\delta(e_i)=0$. This gives the third equality below.
\begin{align*}
H_n(\phi+\delta,\alpha)([re_1\wedge \ldots \wedge e_n])
&=[\alpha(r) (\phi+\delta)(e_1) \wedge \ldots \wedge (\phi+\delta)(e_n)]\\
&= [\alpha(r) (\phi(e_1)+ \delta(e_1)) \wedge \ldots \wedge (\phi(e_n)+\delta(e_n))]\\
&=[\alpha(r)\phi(e_1)\wedge \ldots \wedge\phi(e_n)]\\
&=H_n(\phi,\alpha)([re_1\wedge \ldots \wedge e_n])
\end{align*}
The others are by the definition of the maps involved. 
\end{proof}

\begin{remark}
\label{rem:depth}
Let $R$ be a local ring and $f\colon F\to R$ a free cover of its maximal ideal. We write $\edim R$ for the \emph{embedding dimension} of $R$, which is the minimal number of generators of the maximal ideal; equivalently, the rank of $F$. 

The \emph{depth} of a finitely generated $R$-module $M$, denoted $\depth_RM$, is the length of the maximal $M$-regular sequence in $R$; see \cite[Definition~1.2.7]{brunsherzog1998}. The depth sensitivity of Koszul complexes~\cite[Theorem~1.6.17]{brunsherzog1998} yields that 
\[
\sup\{i\ge 0\mid H_i(K(f)\otimes_RM)\ne 0\} = \edim R - \depth_RM\,.
\] 
Thus \cref{lem:embeddingDim} is only interesting if $\depth R=0$ and $\depth S \le \edim S-\edim R$. A better statement holds in the special case when $\alpha$ is the identity map.
\end{remark}

\begin{lemma}
\label{lem:codepth}
Let $\alpha\colon R\to S$ be an isomorphism of local rings. For $c=\edim R -\depth R$ one has $H_c(\phi,\alpha)$ is independent of the lift $\phi$. 
\end{lemma}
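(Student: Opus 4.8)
The plan is to reduce to the case $\alpha=\id_R$ and then, after choosing a regular sequence of minimal generators, to the already-treated situation of \cref{lem:embeddingDim}.

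By \cref{rem:reduceisotoidentity} it is enough to show that every dg $R$-algebra automorphism $\varphi$ of $K(f)$ induces the identity on $H_c(f)$, where by \cref{rem:depth} the integer $c=\edim R-\depth R$ is the top degree in which $H(f)$ is nonzero. If $\depth R=0$ then $c=\edim R=\rank_R F$ and this is \cref{lem:embeddingDim} applied to the isomorphism $\id_R$, which satisfies $\id_R(\fm)=\fm$. So assume $d\coloneqq\depth R\geq 1$. A ring of positive depth has a nonzerodivisor outside $\fm^2$ (prime avoidance, since $\fm\notin\operatorname{Ass}(R)$ and $\fm\neq\fm^2$), and modding out repeatedly one builds a maximal $R$-regular sequence $t_1,\dots,t_d$ whose members are part of a minimal generating set of $\fm$; pick the minimal generators so that $\bar x_{c+j}=t_j$ for $1\le j\le d$. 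Put $\bar R\coloneqq R/(t_1,\dots,t_d)$, a local ring with $\depth\bar R=0$ and $\edim\bar R=c$, and let $\bar f\colon\bar R^{\,c}\to\bar R$, $\bar e_m\mapsto\bar x_m$, be the free cover of its maximal ideal. Note $K(f)\otimes_R\bar R$ is the Koszul complex of $\bar R$ on $(\bar x_1,\dots,\bar x_c,0,\dots,0)$, hence is $K(\bar f)$ with the exterior variables $\bar e_{c+1},\dots,\bar e_n$ (zero differential) adjoined. I will combine three facts. \textbf{(a)} The natural map $H_c(f)\to H_c(K(f)\otimes_R\bar R)$ is injective and natural in $\varphi$: iterating the long exact sequence for multiplication by $t_i$ on $K(f)\otimes_R R/(t_1,\dots,t_{i-1})$ works, because $t_i$ is a nonzerodivisor on that complex of free modules and annihilates its homology — the latter since that complex is the Koszul complex of $R/(t_1,\dots,t_{i-1})$ on a sequence still generating its maximal ideal, whence \cite[Proposition~1.6.5(b)]{brunsherzog1998} applies — so each long exact sequence breaks into short ones. \textbf{(b)} The image of this map is exactly the image of $H_c(K(\bar f))$ inside $H_c(K(f)\otimes_R\bar R)$, i.e.\ the part of degree zero in the variables $\bar e_{c+1},\dots,\bar e_n$. \textbf{(c)} Since $c=\edim\bar R$, one has $H_c(\bar f)=\Ann(\bar\fm)\cdot\bar e_1\wedge\dots\wedge\bar e_c$, and $\bar\varphi\coloneqq\varphi\otimes_R\bar R$ fixes it pointwise: writing $\bar\varphi=\bigwedge(\id+\bar\delta)$ with $\bar\delta(\bar R^n)\subseteq\bar\fm\bar R^n$, every correction term of $\bar\varphi(s\cdot\bar e_1\wedge\dots\wedge\bar e_c)$ has a coefficient in $\bar\fm$, killed by the socle element $s$ — this is precisely the computation in the proof of \cref{lem:embeddingDim}. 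Given (a)–(c), naturality of the map in (a) yields $j\circ H_c(\varphi)=H_c(\bar\varphi)\circ j$, and since $j(H_c(f))=H_c(\bar f)$ is fixed by $H_c(\bar\varphi)$, injectivity of $j$ forces $H_c(\varphi)=\id$.

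The main obstacle is step (b): one must verify that a class represented by a genuine cycle over $R$ does not, after reduction modulo $(t_1,\dots,t_d)$, acquire a component along the extra Koszul directions $\bar e_{c+1},\dots,\bar e_n$ dual to the regular sequence. The supporting input is the identification $H(f)\cong\Tor^Q(R,k)$ from \cref{rem:degree-one-cycles}: with $Q\to R$ a minimal Cohen presentation and $Q'=Q/(\text{lifts of }t_1,\dots,t_d)$ a regular local ring with $Q'\to\bar R$ a minimal presentation, the fact that $t_1,\dots,t_d$ is $R$-regular means the minimal $Q$-free resolution of $R$, tensored with $Q'$, is the minimal $Q'$-free resolution of $\bar R$; this identifies $\Tor_c^Q(R,k)\xrightarrow{\ \sim\ }\Tor_c^{Q'}(\bar R,k)\cong H_c(\bar f)$ compatibly with the reduction map, which is exactly (b). (A first sanity check is the dimension count $\dim_k H_c(f)=\beta_c^Q(R)=\beta_c^{Q'}(\bar R)=\dim_k\Ann(\bar\fm)=\dim_k H_c(\bar f)$, which already shows $j(H_c(f))$ is \emph{a} complement to the extra-variable part; pinning it down as the $K(\bar f)$-part is what the resolution argument — or, alternatively, a direct but delicate tracking of the coefficient of $\bar e_1\wedge\dots\wedge\bar e_c$ in a cycle over $R$ — achieves.) Everything else is the formalism of \cref{rem:reduceisotoidentity} and \cref{rem:actiononlifts} together with \cref{lem:embeddingDim}.
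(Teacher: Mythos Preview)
Your route diverges from the paper's after the common reduction to $\alpha=\id_R$. The paper proves the stronger claim that for \emph{any} finitely generated $R$-module $M$ the map $H_c(\phi,M)$ on $H_c(K(f)\otimes_RM)$ is independent of $\phi$, where $c=\edim R-\depth_RM$, by induction on $\depth_RM$. The base case $\depth_RM=0$ is exactly your computation (c), but performed in degree $n=\edim R$ on $H_n(K(f)\otimes_RM)=\{m\in M:\fm m=0\}\cdot e_1\wedge\cdots\wedge e_n$. For the inductive step one picks an $M$-regular element $x$; since $x$ kills Koszul homology the long exact sequence of $0\to M\xrightarrow{x}M\to\bar M\to 0$ breaks into short ones, and depth sensitivity ($H_{c+1}(f,M)=0$) makes the \emph{connecting map} $H_{c+1}(f,\bar M)\to H_c(f,M)$ an isomorphism, natural in $\phi$. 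Iterating carries $H_c(f,R)$ isomorphically to $H_n(f,\bar R)$, where the base case applies. No Cohen presentation, no choice of regular sequence inside a minimal generating set, no K\"unneth identification is needed.

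You instead use the \emph{reduction map} $j\colon H_c(f)\hookrightarrow H_c(K(f)\otimes\bar R)$ and stay in degree $c$, which forces step (b): identifying $\im(j)$ with the K\"unneth summand $H_c(K(\bar f))\subset\bigoplus_{i+j=c}H_i(K(\bar f))\otimes_k\Lambda^j_k(\bar e_{c+1},\dots,\bar e_n)$. Your Tor argument does give $\dim_kH_c(f)=\dim_kH_c(\bar f)$, and the quasi-isomorphism $K(f)\to K(\bar f)$ (from $K_R(\underline t)\simeq\bar R$) shows $\pi\circ j$ is an isomorphism; but neither fact pins $\im(j)$ down as the $\Lambda^0$-summand rather than some other complement to $\ker\pi$. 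The phrase ``compatibly with the reduction map'' is precisely what needs proof and is not supplied. This matters: your own socle computation (c) fixes only the $\Lambda^0$-summand, and $H_c(\bar\varphi)$ genuinely moves other summands (e.g.\ a class $[\bar e_{c+1}]$ in $H_0(K(\bar f))\otimes\Lambda^1$ can pick up a contribution in $H_1(K(\bar f))\otimes\Lambda^0$). So (b) is the real crux, and as written it is a gap. The paper's connecting-map argument sidesteps it entirely by climbing to degree $n$ instead of staying at $c$.
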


\begin{proof}
It suffices to treat the case $\alpha=\id_R$; see \cref{rem:reduceisotoidentity}. It helps to consider a more general statement: For any finitely generated $R$-module $M$, set 
\[
H_i(f,M)\coloneqq H_i(K(f)\otimes_RM)\qquad\text{and}\qquad 
  H_i(\phi,M) = H_i(K(\phi,\id_R)\otimes_RM)\,.
\]
This is a self-map of $H_i(K(f)\otimes_RM)$. The desired result is the case $M=R$ of the following claim: $H_c(\phi,M)$ is independent of the lift $\phi$, for $c=\edim R-\depth_RM$.

We verify this claim by induction on $\depth_RM$. The base case is $\depth_RM=0$, and then the claim follows by an argument analogous to the one for \cref{lem:embeddingDim}.

Assume $\depth_RM\ge 1$, let $x$ be an $M$-regular element, and consider the exact sequence of $R$-modules
\[
0\rightarrow M\xra{x} M \rightarrow \overline{M}\rightarrow 0\,.
\]
Since the maximal ideal of $R$ annihilates $H_i(f,M)$ one gets that $x\cdot H_i(f,M)=0$. Thus the exact sequence above induces exact sequences
\[
0\longrightarrow H_i(f,M)\longrightarrow H_i(f,\overline{M})\longrightarrow H_{i-1}(f,M)\longrightarrow 0.
\]
Moreover these are compatible with the maps $H_i(\phi,M)$ and $H_i(\phi,\overline{M})$. Since $H_{c+1}(f,M)=0$, the exact sequence above gives an isomorphism
\[
H_{c+1}(f,\overline{M}) \xrightarrow{\ \cong\ } H_c(f,M)\,.
\]
As $\depth_R\overline{M}=\depth_RM-1$, the induction hypothesis implies that the map $H_{c+1}(\phi,\overline{M})$ is independent of the chosen lift $\phi$, so the same is true of $H_c(\phi,M)$.
\end{proof}

\section{Products in Koszul homology}
\label{sec:prodKoszulHomology}

Throughout this section, we fix a morphism
\[
(\phi,\alpha)\colon (F\xra{f} R)\longrightarrow (G\xra{g}S)
\]
in $\FreeCovers$. In the previous section we found conditions on the source that guarantee that $H(\phi,\alpha)$ is independent of $\phi$. In this section we will establish independence of $H(\phi,\alpha)$ if $H_1(g)\cdot H_{\ge 1}(g)=0$. Moreover, we will show that if $R$ is Gorenstein and $\alpha$ an isomorphism, then the independence of $H_i(\phi,\alpha)$ is equivalent to the independence of $H_{c-i}(\phi,\alpha)$, where $c$ denotes the codepth of $R$.

\begin{lemma} \label{lem:conntomult}
For any $\delta$ in $\hom_{\alpha}(F,\ker(g))$ and $z$ in $H_m(f)$, the difference
\[
H_m(\phi+\delta,\alpha)(z)- H_m(\phi,\alpha)(z)
\]
lies in $H_1(g)\cdot H_{m-1}(g)$.
\end{lemma}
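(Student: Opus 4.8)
The plan is to compute the difference $H_m(\phi+\delta,\alpha)(z) - H_m(\phi,\alpha)(z)$ explicitly on the level of chains, using the multiplicative structure of the Koszul complex. Fix a cycle $w$ in $K_m(f)$ representing $z$. Writing $w$ in terms of a basis $e_1,\dots,e_n$ of $F$, one has $w$ as a sum of monomials $e_{i_1}\wedge\cdots\wedge e_{i_m}$ with coefficients in $R$. Since $K(\phi,\alpha)$ is a morphism of dg algebras, $K_m(\phi+\delta,\alpha)$ sends such a monomial to $(\phi+\delta)(e_{i_1})\wedge\cdots\wedge(\phi+\delta)(e_{i_m})$, and similarly for $\phi$. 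Expanding the product $\prod_j(\phi(e_{i_j})+\delta(e_{i_j}))$ and subtracting $\prod_j\phi(e_{i_j})$, every surviving term contains at least one factor $\delta(e_{i_j})$, which lies in $\ker(g)$, hence is a cycle in $K_1(g)$.

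First I would handle the key reduction: in the expansion, a term with two or more $\delta$-factors contributes an element of $\ker(g)\cdot\ker(g) \subseteq K_2(g)$, and since $\ker(g)\subseteq\mathfrak n G$ and $\mathfrak n$ annihilates $H(g)$ (using \cite[Proposition~1.6.5(b)]{brunsherzog1998}), such a term is actually a boundary — more carefully, a product of two $\ker(g)$-cycles represents a class in $H_1(g)\cdot H_1(g)$, each factor of which is killed by $\mathfrak n$, but we want to land in $H_1(g)\cdot H_{m-1}(g)$ anyway, so it suffices to observe that such higher-order terms already lie in (the subalgebra generated by degree-one classes times) $H_1(g)\cdot H_{\ge 1}(g)$. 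The main term to analyze is the ``linear in $\delta$'' part: the sum over $j$ of $\pm\,\phi(e_{i_1})\wedge\cdots\wedge\delta(e_{i_j})\wedge\cdots\wedge\phi(e_{i_m})$. Each such summand is a product in $K(g)$ of the degree-one cycle $\delta(e_{i_j})$ with a degree-$(m-1)$ element; I need to check that the degree-$(m-1)$ factor, namely $\phi(e_{i_1})\wedge\cdots\widehat{\phantom{j}}\cdots\wedge\phi(e_{i_m})$ together with the $R$-coefficient, assembles (after summing appropriately over the monomials of $w$) into a cycle, so that its class lives in $H_{m-1}(g)$.

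The cleanest way to see this is to recognize the whole difference as a Leibniz-type identity: consider the two chain maps $\Phi \coloneqq K(\phi,\alpha)$ and $\Phi' \coloneqq K(\phi+\delta,\alpha)$, and the degree-zero $\alpha$-equivariant map $D \coloneqq \Phi' - \Phi$ on $K(f)$; since both are algebra maps and $\Phi'$ reduces to $\Phi$ modulo $\ker(g)$, one gets that $D$ is a ``$\Phi$-derivation into the ideal generated by $\ker(g)$'', determined in degree one by $\delta$. On a cycle $w$ of degree $m$, expanding $\Phi'(w) = \Phi(w) + D(w)$ and tracking that $D(w) = \sum (\text{deg-1 cycle})\cdot\Phi(\text{deg-}(m-1)\text{ chain}) + (\text{terms in }\ker(g)^2)$, and that modulo boundaries the $\ker(g)^2$ terms contribute to $H_1(g)\cdot H_1(g) \subseteq H_1(g)\cdot H_{\ge 1}(g)$, yields the claim. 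To pin down that the relevant degree-$(m-1)$ pieces are cycles one uses that $w$ is a cycle and that $\Phi$ is a chain map: differentiating $D(w)$ and using $\partial w = 0$ forces the correction, and the $\mathfrak n$-torsion of $H(g)$ absorbs the ambiguity.

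The main obstacle I anticipate is bookkeeping the signs and the higher-$\delta$-order terms carefully enough to be sure they genuinely represent products of positive-degree Koszul homology classes rather than something I can only say is a boundary; in particular, making precise the sense in which ``$D$ is a $\Phi$-twisted derivation'' and that each monomial of its value factors through the subring $\ker(g)\cdot K(g)$ in a way compatible with taking homology classes. A safe fallback, avoiding the derivation language, is to argue monomial-by-monomial: for a single basis monomial $e_I$ appearing in $w$ with coefficient $r_I \in R$, rewrite $\prod(\phi+\delta)(e_{i})$ minus $\prod\phi(e_i)$ as a telescoping sum $\sum_{j}\big(\prod_{l<j}(\phi+\delta)(e_{i_l})\big)\wedge\delta(e_{i_j})\wedge\big(\prod_{l>j}\phi(e_{i_l})\big)$, factor out the degree-one cycle $\delta(e_{i_j})$, and then sum these contributions over all monomials $I$ of $w$; the resulting sum of the ``complementary'' degree-$(m-1)$ chains is shown to be a cycle up to boundaries by applying $\partial$ and using $\partial w = 0$ together with $\mathfrak n H(g) = 0$, exactly as in the proof of \cref{lem:H1}.
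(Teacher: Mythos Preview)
Your general strategy---expand the product, pull out a degree-one cycle $\delta(e_{i_j})$, and argue the complementary degree-$(m-1)$ piece is a cycle---is the right one, but you are missing the key simplification that makes it go through cleanly. The paper first reduces, by a telescoping in the \emph{map} rather than in the monomials, to the case where $\delta$ vanishes on every basis vector except a single $e_i$: write $\delta=\sum_k\delta_k$ with $\delta_k$ supported on $e_k$, and decompose
\[
H(\phi+\delta,\alpha)-H(\phi,\alpha)=\sum_k\bigl(H(\phi+\delta_1+\cdots+\delta_k,\alpha)-H(\phi+\delta_1+\cdots+\delta_{k-1},\alpha)\bigr),
\]
so it suffices to treat $\delta$ supported on one $e_i$. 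This step does two things at once. First, it eliminates your higher-order terms entirely: since each basis monomial $e_{i_1}\wedge\cdots\wedge e_{i_m}$ contains $e_i$ at most once, the expansion of $(\phi+\delta)(e_{i_1})\wedge\cdots\wedge(\phi+\delta)(e_{i_m})$ has at most one $\delta$ in it. Second, it lets you use the direct sum decomposition $K(f)=e_iL\oplus L$, where $L$ is the sub-dg-algebra on the $e_j$ with $j\neq i$: writing a representative of $z$ as $e_i\cdot r+r'$ with $r,r'\in L$, the vanishing of its differential forces $d(r)=0$ on the nose (compare the $e_iL$ components), and then the difference is exactly $\delta(e_i)\cdot K(\phi,\alpha)(r)$, a product of two genuine cycles.

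Without this reduction your argument has real gaps. Your treatment of the quadratic-and-higher terms in $\delta$ is not correct as stated: a term like $\delta(e_{i_a})\wedge\delta(e_{i_b})\wedge(\text{degree-}(m-2)\text{ chain})$ is not visibly a product of homology classes, because the remaining factor is not a cycle, and invoking $\mathfrak n H(g)=0$ does not kill it either---writing $\delta(e_{i_a})\in\mathfrak n G$ and pushing the $\mathfrak n$ onto the other factors leaves you with non-cycles multiplied by boundaries, which need not be boundaries. Likewise, in your fallback, after factoring out $\delta(e_{i_j})$ you must group the complementary $(m-1)$-chains according to the index $k=i_j$ before you can hope they sum to a cycle; you assert this follows from $\partial w=0$ and $\mathfrak n H(g)=0$, but you have not shown it, and with the mixed factors $\prod_{l<j}(\phi+\delta)(e_{i_l})$ appearing in your telescope the bookkeeping is genuinely unpleasant. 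The one-basis-vector reduction is precisely what turns this into a two-line computation.
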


\begin{proof}
Let $e_1,\ldots,e_n$ be a basis of $F$. We can assume without loss of generality that $\delta(e_j)=0$ for all $j\neq i$, where $1\leq i\leq n$ is fixed.

 Let $L\subseteq K(f)$ be the graded subalgebra spanned by the $e_j$'s for $j\neq i$. Note that $L$ is closed under the differential of $K(f)$. Since the underlying algebra of $K(f)$ is an exterior algebra, the graded sub-modules $L\subseteq K(f)$ and $e_iL\subseteq K(f)$ are complementary:
\[ 
e_i\cdot L\oplus L = K(f).
\]
Fix $z\in H_m(f)$. Choose a cycle representing $z$ and write it in the form
\[
e_i\cdot r+r',
\]
where $r$ and $r'$ lie in $L$. Applying the differential to this cycle, we get
\[0 = d(e_i\cdot r+r')= d(e_i)\cdot r-e_i\cdot d(r)+d(r').\]
The first and the third summand lie in $L$, and the second term lies in $e_i\cdot L$. Thus we can conclude that $d(r)=0$.

Since $\delta$ fixes each element of $L$, the maps $K(\phi+\delta,\alpha)$ and $K(\phi,\alpha)$ agree on $r$ and on $r'$. It follows that
\[(K(\phi+\delta,\alpha)-K(\phi,\alpha))(e_i\cdot r+r')= \delta(e_i)\cdot K(\phi,\alpha)(r).\]
Since both factors on the right-hand side are cycles, we get that 
\[H(\phi+\delta,\alpha)(z)-H(\phi,\alpha)(z)=[\delta(e_i)]\cdot H(\phi,\alpha)([r])\]
is a product in homology as well.
\end{proof}

\begin{remark}
Let $e_1,\ldots, e_n$ be a basis of the free $R$-module $F$ and fix $1\leq i\leq n$. Our proof establishes the decomposition
\[H_m(\phi+\delta,\alpha)(z)- H_m(\phi,\alpha)(z) \in [\delta(e_i)]\cdot H_{m-1}(g)
\]
if $\delta\in \hom_\alpha(F,\ker(g))$ satisfies $\delta(e_j)=0$ for all $j\neq i$.
\end{remark}

Observe that in the statement below the hypothesis on $H_{m-1}(g)$ depends only on $S$ and not on the chosen free cover $g$. Contrast this with \cref{cor:ci} which identifies conditions on $R$ that lead to a similar conclusion.

\cref{lem:conntomult} gives us families of local rings, outside complete intersections for which automorphism of their Koszul dg algebras induces the identity on homology. 

\begin{corollary}
\label{cor:trivial-product}
If $H_1(g) H_{m-1}(g)=0$ for some integer $m\ge 2$, then for any morphism $(\phi,\alpha)$ from 
$f\colon F\to R$ to $g\colon G\to S$ in $\FreeCovers$ the induced map
\[
H_m(\phi,\alpha)\colon H_m(f)\rightarrow H_m(g)
\]
is independent of the chosen lift $\phi$ of $\alpha$. \qed
\end{corollary}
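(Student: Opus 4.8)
The plan is to deduce this directly from \cref{lem:conntomult} together with the description of the set of lifts given in \cref{rem:actiononlifts}. Fix one lift $\phi$ of $\alpha$ and let $\phi'$ be any other lift. By \cref{rem:actiononlifts} the difference $\delta \coloneqq \phi' - \phi$ is an element of $\hom_\alpha(F,\ker(g))$, and conversely every such $\delta$ arises this way since the action on the set of lifts is transitive. Thus it suffices to prove $H_m(\phi+\delta,\alpha) = H_m(\phi,\alpha)$ for every $\delta \in \hom_\alpha(F,\ker(g))$.

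For this, fix an arbitrary class $z \in H_m(f)$. By \cref{lem:conntomult} we have
\[
H_m(\phi+\delta,\alpha)(z) - H_m(\phi,\alpha)(z) \in H_1(g)\cdot H_{m-1}(g)\,.
\]
The hypothesis $H_1(g) H_{m-1}(g) = 0$ forces the right-hand side to be the zero subspace, so the two values coincide. Since $z$ was arbitrary, $H_m(\phi+\delta,\alpha) = H_m(\phi,\alpha)$, and hence $H_m(\phi,\alpha)$ does not depend on the chosen lift of $\alpha$.

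There is no genuine obstacle in this argument; all the work has already been done in \cref{lem:conntomult}. The only point that merits a moment's attention is invoking transitivity of the $\hom_\alpha(F,\ker(g))$-action on the set of lifts of $\alpha$ from \cref{rem:actiononlifts} — this is what lets us reduce an arbitrary pair of lifts to the additive perturbation $\phi \mapsto \phi + \delta$ needed to apply the lemma — and it holds because the difference of any two lifts of $\alpha$ is $\alpha$-equivariant and lands in $\ker(g)$.
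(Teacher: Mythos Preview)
Your argument is correct and is exactly what the paper intends: the corollary is marked with a \qed in its statement because it is an immediate consequence of \cref{lem:conntomult} (together with the transitivity observed in \cref{rem:actiononlifts}), just as you have written. There is nothing to add.
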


The preceding result leads naturally to a search for local rings whose Koszul homology has trivial products. Golod rings fit this bill. These are local rings characterised by the property that their Koszul complex is quasi-isomorphic, as a dg algebra, to the trivial extension algebra $k\ltimes V$, where $V$ is a graded $k$-vector space; in particular, the product of any two elements in positive degree is zero; see~\cite{golod1962}, and also \cite[(2.3)~Theorem]{avramov1986}. These sources also describe various families of Golod rings; see also \cite[\S5.2]{avramov1998}.

\begin{example}
Let $Q$ be a regular local ring with maximal ideal $\fq$.

For any integer $n\ge 2$, the local ring $S\coloneqq Q/\fq^n$ is Golod. Thus, for example, over any field $k$ the following local ring is Golod:
\[
\frac{k[x,y]}{(x^2,xy,y^2)}
\]
Any quotient ring $S$ of $Q$ that is Cohen-Macaulay with $\dim S \ge \dim Q - 2$, but not Gorenstein, is Golod. Here is one such ring:
\[
\frac{\pos{x,y,z}}{(y^2-xz,z^2-x^2y, x^3-yz)}
\]
This ring is isomorphic to $\pos{t^3,t^4,t^5}\subseteq \pos{t}$, and so a one dimensional domain. In particular, it is Cohen-Macaulay. That it is not Gorenstein can be checked by applying \cite[Theorem~3.2.10]{brunsherzog1998} . In particular, the ring is Golod.

Rings of the form $Q/IJ$ where $I$ and $J$ are ideals in $\fq$ tend to be Golod. For example, the local ring 
\[
\frac{\pos{x,y,u,v}}{(x^2,y^2)(u^2,v^2)}
\]
is Golod. However, not every such quotient need to Golod; see \cref{ex:destefani} below. 
\end{example}

There are non-Golod local rings whose Koszul homology has trivial product structure; see, for example, Katth\"an~\cite[Theorem~3.1]{katthan2017}. The preceding corollary applies also to such rings.

The example below from \cite[Example~2.1]{destefani2016} illustrates that $H_1\cdot H_{\geq 1}$ can be trivial, though there are nontrivial products in Koszul homology.

\begin{example}
\label{ex:destefani}
Let $k$ be a field and set
\[
S\coloneqq \frac{k[x,y,z,w]}{(x,y,z,w)\cdot (x^2,y^2,z^2,w^2)}
\]
This is a local ring of Krull dimension $0$. It is not Golod because the Koszul algebra $H$ has $H_2 \cdot H_2 \neq 0$. Nevertheless $H_1 \cdot H_i=0$ for $i\geq 1$, so \cref{cor:trivial-product} applies.
\end{example}

Next we present some examples that illustrate that triviality of multiplication by $H_1$ is sufficient, but not necessary, for trivial action on Koszul homology.

\begin{example}
The following examples all concern polynomial rings over $\FF_2$, in the obvious variables. We specify only the defining relations of the ring.

\begin{table}[!ht]
  \centering  
 \begin{tabular}{c|p{5.5cm}}
defining ideal & multiplication on Koszul homology\\ \hline
$(x^2, y^2, xz, yz, z^2-xy)$ & $H_1 H_1=0$, but $H_1 H_2\neq 0$ \\ \hline
$(x^2,xy,y^2,xu,yv,xv-yu,u^2,uv,v^2)$ & $H_1^2=0 = H_1H_2$, but $H_1 H_3\neq 0$\\ \hline 
$(x^2, y^2, z^2, w^2, yz-xw)$ & $H_1 H_1\neq 0$ but $H_1 H_i=0$ for $i\geq 2$ 
 \end{tabular} 
\end{table} 
The quotient rings are all local, and of Krull dimension $0$. The second example is \cite[Example~6.5]{gheibi2019quasi}. All these rings have the property that every dga-automorphism of their Koszul complexes induces the identity on homology. For the first two, this holds by \cref{lem:H1}, \cref{lem:embeddingDim} and \cref{cor:trivial-product}. The last ring is \cref{ex:filtrationargument}.
\end{example}

\begin{remark}
\label{rem:M2}
Let $R$ be a quotient of a polynomial ring by a homogeneous ideal and $K$ the Koszul complex of $R$ with respect to the maximal ideal that is generated by the indeterminates as in \cref{rem:examplespolynomialquotients}. Given a lift $\phi\colon R^n\to R^n$ of $\id_R$ we can check if the induced map induces the identity on homology using the package {\tt DGAlgebras} \cite{mooremacaulay2} in Macaulay2. The command 
\[{\tt K = koszulComplexDGA(R,Variable=>"e")}\]
builds the Koszul complex of $R$ as a dga and denotes the generators of the underlying exterior algebra by $e_1,\ldots, e_n$. The induced map $K(\phi,\id_R)$ is created with the command \[{\tt Kphi = dgAlgebraMap(K,K, matrix\{\{}\phi(e_1),\ldots,\phi(e_n){\tt \}\})},\] by replacing $\phi(e_i)$ by the actual linear combination $ \lambda_{1,i}e_1+\ldots +\lambda_{n,i} e_n$
where $(\lambda_{i,j})\in R^{n\times n}$ is the matrix representing $\phi$.

A quick way to create the identity map on $K(f)$ is 
\[{\tt Kid=dgAlgebraMap(K,K,getBasis(1,K))},\] as the command {\tt getBasis(1,K)} writes the generators $e_1,\ldots,e_n$ as a matrix.

Finally, the command
\[
{\tt HH(toComplexMap~ Kphi)==HH(toComplexMap~Kid)}
\]
returns true if the two maps induce the same map on homology, and false otherwise.

When $k=R/\fm$ is finite one only needs to check $n\cdot \dim_k H_1(f)$ lifts of $\id_R$ to decide whether any lift induces the identity on homology; see \cref{rem:reducecasesforprogram}.
\end{remark}

None of our criteria work for the following example. We could only verify it by a computer program as explained in \cref{rem:M2}. 

\begin{example} 
According to Macaulay2 the ring 
\[
\frac{\FF_2[x,y,z,w]}{(x^2, y^2, z^2, w^3, yz-xw)}
\]
has the property that every automorphism of its Koszul dga induces the identity on homology. This ring is neither a complete intersection, nor is the multiplication on Koszul homology trivial. In fact, $H_1\cdot H_i = 0$ for $i\geq 2$ but $H_1\cdot H_1\neq 0$. 
\end{example}

Adding one relation as follows negates this property.

\begin{example} Consider the ring
\[
R\coloneqq \frac{\FF_2[x,y,z,w]}{(x^2, y^2, z^2, w^3, yz-xw, y w^2)}
\]
and the free cover $f\colon R^4 \xrightarrow{(x\; y\; z\; w\;)} R$. The map 
\[
e_1 \mapsto e_1+ z e_3\,,\qquad\text{and}\qquad e_i\mapsto e_i\quad\text{for $i\neq 1$,}
\]
does not induce the identity on $H_2(f)$. It sends the class $[w^2e_1e_2+ywe_2e_3]$ to
\[
[w^2e_1e_2+ywe_2e_3]+[w^2e_2]\cdot[ze_3]
\]
We used the computer algebra system Macaulay2 to verify that $0\neq [w^2e_2]\cdot [ze_3]$.
\end{example}

\begin{remark}\label{rem:gorenstein}
Let $R$ be a Gorenstein local ring and $K$ the Koszul complex on a free cover of $\fm$. Set 
\[
c=\sup\{i\mid H_i(K) \ne 0\}\,.
\]
Thus $c=\edim R - \dim R$, as discussed in \cref{rem:depth}, keeping in mind that $\depth R=\dim R$ since $R$ is Gorenstein. The product on $H(K)$ induces for each integer $1\le i\le c-1$ a pairing
\[
H_i(K) \times H_{c-i}(K) \longrightarrow H_c(K)\,.
\]
The Gorenstein condition is exactly that $H_c(K)\cong k$ and the pairing above is perfect for each $i$ in the given range; said otherwise, $H(K)$ is a Poincar\'e duality algebra. The import of the next result, which builds on \cref{lem:codepth}, is that this duality is reflected in the action of the automorphisms of $R$ on Koszul homology. 
\end{remark}

\begin{lemma}\label{lem:gorenstein}
Let $R$ be a Gorenstein local ring, $\alpha\colon R\to S$ an isomorphism of rings, and set $c=\edim R -\dim R$.
If for some integer $i$ the map $H_i(\phi,\alpha)$ is independent of the lift $\phi$, then so is the map $H_{c-i}(\phi,\alpha)$. 
\end{lemma}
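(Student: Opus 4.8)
The plan is to reduce immediately to the case $\alpha=\id_R$ using \cref{rem:reduceisotoidentity}, so that the hypothesis becomes ``every lift of $\id_R$ induces the identity on $H_i(f)$'' and the conclusion to be proved becomes ``every lift of $\id_R$ induces the identity on $H_{c-i}(f)$.'' Equivalently, writing $\varphi$ for the dg $R$-algebra automorphism of $K(f)$ attached to an arbitrary lift of $\id_R$, I want to show that $H_i(\varphi)=\id$ for all such $\varphi$ forces $H_{c-i}(\varphi)=\id$ for all such $\varphi$.

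Three facts drive the argument. First, $H(\varphi)$ is a graded $k$-algebra automorphism of $H(f)$: it is multiplicative because $\varphi$ is a morphism of dg algebras, and $k$-linear because $\varphi$ is the identity on $K_0(f)=R$, hence on $H_0(f)=k$. Second, by \cref{lem:codepth} the map $H_c(\varphi)$ is the identity for every such $\varphi$; here one uses that, $R$ being Gorenstein, $\depth R=\dim R$, so the codepth appearing in \cref{lem:codepth} coincides with the present $c=\edim R-\dim R$. Third, the Gorenstein hypothesis gives Poincar\'e duality on $H(f)$: as recalled in \cref{rem:gorenstein}, $H_c(f)\cong k$ and for each $1\le i\le c-1$ the product pairing $H_i(f)\times H_{c-i}(f)\to H_c(f)\cong k$ is perfect.

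With these in hand the proof is short. Fix $\varphi$ as above and assume $H_i(\varphi)=\id$. For $a\in H_i(f)$ and $b\in H_{c-i}(f)$, multiplicativity of $H(\varphi)$ together with $H_c(\varphi)=\id$ and $H_i(\varphi)=\id$ gives
\[
a\cdot b \;=\; H_c(\varphi)(a\cdot b)\;=\;H_i(\varphi)(a)\cdot H_{c-i}(\varphi)(b)\;=\;a\cdot H_{c-i}(\varphi)(b),
\]
so $a\cdot\bigl(b-H_{c-i}(\varphi)(b)\bigr)=0$ for all $a\in H_i(f)$. Perfectness of the pairing forces $H_{c-i}(\varphi)(b)=b$, i.e.\ $H_{c-i}(\varphi)=\id$. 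Since $\varphi$ was an arbitrary automorphism coming from a lift of $\id_R$, \cref{rem:reduceisotoidentity} yields that $H_{c-i}(\phi,\alpha)$ is independent of $\phi$. The remaining indices are trivial: if $i\notin\{0,1,\dots,c\}$ then $H_i(f)=0$ and $H_{c-i}(f)=0$, so both maps are zero; for $i=0$ the map on $H_{c}(f)$ is the identity by \cref{lem:codepth}, and for $i=c$ the map on $H_0(f)=k$ is the identity.

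I expect the only subtlety to be bookkeeping rather than mathematics: confirming that the codepth in \cref{lem:codepth} matches $c=\edim R-\dim R$ (it does, by Gorensteinness), and noting that one may invoke Poincar\'e duality for $H(f)$ without assuming $R$ complete, since passing to the completion changes neither the Koszul complex up to dg-algebra quasi-isomorphism nor the Gorenstein property. No serious obstacle remains.
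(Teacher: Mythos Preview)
Your proof is correct and follows essentially the same approach as the paper: reduce to $\alpha=\id_R$ via \cref{rem:reduceisotoidentity}, invoke \cref{lem:codepth} to get $H_c(\varphi)=\id$, and then use Poincar\'e duality on $H(f)$ to deduce $H_{c-i}(\varphi)=\id$ from $H_i(\varphi)=\id$. The paper packages the last step as a commutative diagram with the duality isomorphism $H_{c-i}\xrightarrow{\cong}\Hom_{H_0}(H_i,H_c)$, while you compute directly with the pairing; these are the same argument.
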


\begin{proof} As before we can assume $\alpha=\id_R$; see \cref{rem:reduceisotoidentity}. Recall that $H(\phi,\id_R)$ and $H(\psi,\id_R)$ agree on $H_c(f)$ by \cref{lem:codepth}. The desired statement follows from a more general statement about a graded algebra $H=\bigoplus_{i=0}^c H_i$ that satisfies Poincar\'e duality: Given automorphisms $\phi, \psi\colon H\to H$ with $\phi_i =\psi_i$ and $\phi_c=\psi_c$, one has $\phi_{c-i} = \psi_{c-i}$.

Indeed, by definition of a perfect pairing, the $H_0$-linear map
\[
H_{c-i}\longrightarrow \Hom_{H_0}(H_i,H_c)
\]
sending $z\in H_{c-i}$ to the map given by left multiplication with $z$ is an isomorphism. Consider the commutative (check!) diagram
\[
\begin{tikzcd}
H_{c-i} \arrow{r}{\cong}\arrow{d}[swap]{\phi_{c-i}} & \Hom_{H_0}(H_i,H_c)\arrow{d}{\Hom_{H_0}(\phi_i^{-1},\phi_c)}\\
H_{c-i} \arrow{r}{\cong} & \Hom_{H_0}(H_i,H_c)
\end{tikzcd}
\]
and the analogous diagram with $\psi$ instead of $\phi$ commutes as well. By assumption the arrows on the right-hand side of the two diagrams agree, hence so do the arrows on the left-hand side, i.e., $\phi_{c-i}=\psi_{c-i}$.
\end{proof}

To round off this discussion we give an example of a Gorenstein ring for which the map on Koszul homology does depend on the lift, so that the preceding result seems to be the best one can hope for in general.

\begin{example}
\label{ex:gorenstein}
Consider the ring
\[
R\coloneqq \pos[\FF_2]{t^9,t^{10},t^{11},t^{13},t^{17}}\,.
\]
This ring is Gorenstein; see \cite[Example~3.8]{kustinmiller1982}. Take the free cover
$
f\colon F \to R$ represented by $\begin{pmatrix}t^9& t^{10}&t^{11}&t^{13}&t^{17}\end{pmatrix}
$.
Consider the automorphism of $K(f)$ that fixes $e_i$ for $i\neq 5$, and maps
\[
e_5\mapsto e_5+t^{10}e_2+t^{9}e_3\,.
\]
It is easy to verify that $[t^{19}e_1e_3+(t^{13}e_1+t^{11}e_3)e_5]$ is a class in $H(f)$ and that it is sent to itself plus \[
[t^{13}e_1+t^{11}e_3]\cdot [t^{10}e_2+t^{9}e_3]
\]
We verified with Macaulay2 and by hand that this product is not zero in $H(f)$.
\end{example}

\section{Filtrations}\label{sec:filtrations}
As in \cref{sec:framework}, let $(R,\fm,k)$ be a local ring and $f\colon F\to R$ a free cover of $\fm$. We consider an additional structure on the homology of the Koszul complex associated to $f$, which sheds further light on the problem we have been considering. 

The associated graded ring of $R$ with respect to its $\fm$-adic filtration is
\[
\gr_{\fm}(R) \coloneqq \bigoplus_{l\geq 0} {\fm^l}/{\fm^{l+1}}\,.
\]
Thus $\gr_{\fm}(R)_0=k$, and $\gr_{\fm}(R)$ is a graded $k$-algebra. One can extend the $\fm$-adic filtration on $R$ to one on $K(f)$ as follows; see, for example, \cite[\S3.8]{avramoviyengarmiller}.

\begin{definition} 
\label{de:serre-filtration}
Let $J$ be the kernel of the canonical augmentation $K(f)\to k$. This is a dg ideal in $K(f)$ with $J_0=\fm$. Setting $\filt{l}K(f) \coloneqq J^l$ for each integer $l\geq 0$ yields a decreasing filtration on $K(f)$, with 
\[
\filt{l}K_i(f) = \fm^{l-i} K_i(f)\,.
\]
Consider the associated graded object of this filtration:
\[
\gr K(f)\coloneqq \bigoplus_l J^l/J^{l+1}\,.
\]
It is easy to check that this is the Koszul complex on the map
\[
\gr(f)\colon \gr(F)(-1)\longrightarrow \gr_{\fm}(R)\,,
\]
where $\gr_{\fm}(F)(-1)$ denotes the free graded $\gr_{\fm}(R)$ module whose component in degree $l$ is $\fm^{l-1}F/\fm^l F$. We consider also the induced filtration on the homology of $K(f)$, namely the image
\[
\filt{l}H(f) \coloneqq \im(H(J^l)\longrightarrow H(f))\,.
\]
Thus a class $[z] \in H_i(f)$ is in $\filt{l}H(f)$ precisely when $z$ is homologous, in $K(f)$, to a cycle $w$ in $\fm^{l-i}K_i(f)$. Set
\[
 \gr H(f)\coloneqq \bigoplus_s \filt{s} H(f)/\filt{s+1}H(f)\,.
 \]
This is a graded $k$-algebra since the filtration of $H(f)$ is multiplicative. The filtration is also bounded, thus $H(f)$ and its associated graded $\gr H(f)$ are abstractly isomorphic as graded $k$-vector spaces.
\end{definition}

It is well-known that the algebras $H(f)$, $\gr H(f)$ and the Koszul homology of $\gr_\fm R$ are all different in general. We provide an example, with details.

\begin{example}
Consider the $\FF_2$-algebra 
\[
R\coloneqq \frac{\FF_2[x,y]}{(x^3+y^2, y^3)}
\]
and the free cover $f\colon R^2\xra{(x \; y)} R$. The $\FF_2$-module $H_1(f)$ has basis
\[
z_1\coloneqq [x^2e_1 + y e_2] \text{ and } z_2\coloneqq [y^2e_2]= [x^3e_2]\,.
\]
Since $R$ is a complete intersection $H(f)$ is the exterior algebra on $H_1(f)$; see \cref{rem:cirings}. On the other hand 
\[
\gr_\fm R\cong \frac{\FF_2[\overline{x},\overline{y}]}{(\overline{x}^6,\overline{x}^3\overline{y},\overline{y}^2)}
\]
as $\FF_2$-algebras, where $\overline{x}, \overline{y}$ are the classes of $x,y$, respectively, in $\fm/\fm^2$. Thus its Koszul homology is three dimensional in degree one, two dimensional in degree two, and products of positive degree elements are zero. In particular this algebra cannot be isomorphic to $H(f)$, nor to $\gr H(f)$, for the latter is isomorphic to $H(f)$ as a graded $\FF_2$-vector space. 

One can check $\gr H_2(f)=\filt{7}H_2(f)/0$ and $\gr H_1(f)=\filt{2}H_1(f)/\filt{3}H_1(f)\oplus \filt{4}H_1(f)/0$ which implies that products of positive degree elements of $\gr H(f)$ are zero as well. Thus $\gr H(f)$ and $H(f)$ cannot be isomorphic as $\FF_2$-algebras.
\end{example}

The constructions above are functorial in the following sense.

\begin{remark}
Taking the associated graded of a morphism
\[
(\phi,\alpha)\colon f\longrightarrow g
\]
in $\FreeCovers$ yields a map
\[(\gr \phi,\gr \alpha)\colon \gr(f)\longrightarrow \gr(g)
\]
in $\FreeCovers$. The induced morphism of dg algebras $K(\phi, \alpha)\colon K(f)\to K(g)$ maps $\filt{l}K(f)$ to $\filt{l}K(g)$, that it to say, it is a map of filtered complexes. We thus get a map of the associated graded objects 
\[
\gr K(\phi,\alpha)\colon \gr K(f) \longrightarrow \gr K(g) 
\]
and this map agrees with $K(\gr \phi,\gr \alpha)$. This is a $\gr(\alpha)$-equivariant morphism of dg algebras. In the same vein there is a map
\[
\gr(H(\phi,\alpha))\colon \gr H(f)\longrightarrow \gr H(g)\,,
\]
of graded rings.
\end{remark}

\begin{definition}
 Observe that the $k$-algebra $\gr_{\fm}(R)$ is generated by its degree one component; said otherwise, the natural map $\varepsilon\colon \mathrm{sym}_k(\fm/\fm^2)\to \gr_\fm(R)$ is surjective. The \emph{order} of $R$, denoted $\ord(R)$, is the minimal degree of a nonzero element in $\ker(\varepsilon)$. If $\ker(\varepsilon)\ne 0$, then $\ord(R)\ge 2$, and if $\ker(\varepsilon)=0$, equivalently, if $R$ is regular, the order is declared to be infinity.
\end{definition}

The following characterisation of orders is well-known; we give a proof for lack of a suitable reference. Recall that $K(f)$ is independent, up to an isomorphism of dg $R$-algebras, of the choice of a free cover of $\fm$; see \cite[\S1.6, page 52]{brunsherzog1998}.

\begin{lemma}
\label{le:ord}
With $f\colon F\to R$ a free cover of $\fm$, there is an equality
\[
\ord(R)= \sup\{l\mid \filt{l}{H_1(f)} = H_1(f)\}\,.
\]
\end{lemma}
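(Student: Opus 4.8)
The plan is to translate both sides of the asserted equality into ideal-theoretic data attached to a regular presentation of $R$. As a preliminary reduction, note that $\gr_\fm R$ is unaffected by passing to the completion $\widehat R$, so $\ord(R)$ is too; and $H_1(f)$ together with its Serre filtration consists of finite-length modules, so these are unaffected as well. Thus I may assume $R$ is complete and fix a minimal Cohen presentation $\pi\colon Q\to R$ with $\fq=(x_1,\dots,x_n)$ and $I=\ker\pi\subseteq\fq^2$, exactly as in \cref{rem:degree-one-cycles}; then $f$ is represented by the row $(\overline x_1,\dots,\overline x_n)$ and $K(f)=K^Q(x_1,\dots,x_n)\otimes_Q R$.

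I would then establish, for every integer $l\ge0$, the two equivalences
\[
\filt{l}H_1(f)=H_1(f)\ \Longleftrightarrow\ I\subseteq\fq^l\ \Longleftrightarrow\ \ord(R)\ge l .
\]
The right-hand equivalence is routine: since $\fm/\fm^2\cong\fq/\fq^2$ and $Q$ is regular one has $\gr_\fm R=\gr_\fq Q/\mathrm{in}(I)$, where $\mathrm{in}(I)$ denotes the ideal of initial forms, so $\ord(R)$ is the least degree of an initial form of a nonzero element of $I$; and a nonzero $q\in Q$ lies in $\fq^l$ precisely when its initial degree is at least $l$. For the left-hand equivalence, the implication $I\subseteq\fq^l\Rightarrow\filt{l}H_1(f)=H_1(f)$ is the easy half: given a degree-one cycle $w=\sum b_je_j$ of $K(f)$, lift the $b_j$ to $\tilde b_j\in Q$; since $\pi(\sum\tilde b_jx_j)=0$ we get $\sum\tilde b_jx_j\in I\subseteq\fq^l$, so $\sum\tilde b_jx_j=\sum p_jx_j$ for some $p_j\in\fq^{l-1}$. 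Then $(\tilde b_j-p_j)_j$ is a syzygy of the regular sequence $x_1,\dots,x_n$ in $Q$, hence a Koszul boundary, and reducing modulo $I$ exhibits $w$ as homologous in $K(f)$ to $\sum\overline p_je_j\in\fm^{l-1}K_1(f)=\filt{l}K_1(f)$. So every class of $H_1(f)$ lies in $\filt{l}H_1(f)$.

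The hard part will be the reverse implication $\filt{l}H_1(f)=H_1(f)\Rightarrow I\subseteq\fq^l$, which is where the bookkeeping with lifts must be done with care. Take $q\in I$ and write $q=\sum x_j\tilde a_j$ with $\tilde a_j\in\fq$, possible because $I\subseteq\fq^2$. The image $w=\sum\overline a_je_j$ is a degree-one cycle of $K(f)$, so by hypothesis $w$ is homologous to a cycle $w'=\sum b'_je_j$ with $b'_j\in\fm^{l-1}$; write $w-w'=d\gamma$ with $\gamma\in K_2(f)$. Lifting the coefficients of $\gamma$ to $Q$ and the $b'_j$ to $\tilde b'_j\in\fq^{l-1}$, one reads off for each $j$ an equality $\tilde a_j=\tilde b'_j+\delta_j+\iota_j$ in $Q$, where $\delta_j$ is the $j$-th coordinate of the Koszul differential applied to the chosen lift of $\gamma$ and $\iota_j\in I$. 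Multiplying by $x_j$ and summing over $j$, the term $\sum_j\delta_jx_j$ vanishes because $d^2=0$ in $K^Q(x_1,\dots,x_n)$, leaving $q\in\fq^{l-1}\fq+\fq I=\fq^l+\fq I$. Since $q\in I$ was arbitrary, $I\subseteq\fq^l+\fq I$; a Nakayama argument applied to the finitely generated $Q$-module $(I+\fq^l)/\fq^l$, which equals its own $\fq$-multiple, then gives $I\subseteq\fq^l$.

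Combining the two equivalences shows $\{l\ge0:\filt{l}H_1(f)=H_1(f)\}=\{l\ge0:\ord(R)\ge l\}$, and taking suprema yields the stated equality; both sides equal $\infty$ exactly when $R$ is regular, that is, when $I=0$ and $H_1(f)=0$.
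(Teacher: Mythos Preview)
Your proof is correct and follows essentially the same route as the paper: reduce to the complete case, take a minimal Cohen presentation $Q\twoheadrightarrow R$ with kernel $I$, and show that $\filt{l}H_1(f)=H_1(f)$ is equivalent to $I\subseteq\fq^l$. The only notable difference is that in the hard direction the paper argues with a fixed generating set $f_1,\dots,f_c$ of $I$ and concludes $f_i\in\fq^l$ directly, whereas you treat an arbitrary $q\in I$, arrive at $I\subseteq\fq^l+\fq I$, and close with Nakayama; your version is in fact a bit more careful, since the paper's passage from $d(\tilde z_i)=d(\tilde w_i)$ modulo $\fq I$ to $f_i\in\fq^l$ tacitly needs the same Nakayama step you make explicit.
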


\begin{proof}
One can assume $R$ is $\fm$-adically complete. Indeed, let $\iota\colon R\to \widehat R$ denote the $\fm$-adic completion of $R$. Then $\widehat{R}$ is a local ring with maximal ideal $\fm \widehat{R}$, and residue field $k$. The map $\iota$ induces an isomorphism of graded $k$-algebras
\[
\gr_{\fm}(R)\cong \gr_{\fm \widehat{R}}(\widehat{R})\,.
\]
In particular, $\ord(R) = \ord(\widehat{R})$. Moreover, $\widehat{f}\coloneqq \widehat{R}\otimes_Rf$ is a free cover of $\fm\widehat{R}$, there is an isomorphism $K(\widehat{f})\cong \widehat{R}\otimes_R K(f)$ of dg $\widehat{R}$-algebras and the map
\[
\iota\otimes_R K(f)\colon K(f)\longrightarrow K(\widehat{f})
\]
is a lift of $\iota$. It is a quasi-isomorphism, for $\fm\cdot H(f)=0$. Moreover this map, being a map of algebras, respects the filtrations on either side, and is a quasi-isomorphism on each level of the filtration. It follows that $H(f)$ and $H(\widehat{f})$ are isomorphic as filtered objects. It is also clear that $\gr(\iota\otimes_R K(f),\iota)$ is an isomorphism. Thus replacing $R$ by $\widehat{R}$, we assume $R$ is $\fm$-adically complete. 

Then there is a surjection $\pi\colon (Q,\fq,k)\to R$ where $Q$ is a regular local ring and the ideal $I\coloneqq \ker(\pi)$ satisfies $I\subseteq \fq^2$; see \cref{rem:degree-one-cycles}. We adopt the notation from \emph{op.\ cit.} If $I=0$, then $R$ is regular, and the desired result is clear, so we suppose $I$ is nonzero, so that $\ord(R)$ is finite. There is a commutative diagram of $k$-algebras 
\[
\begin{tikzcd}
\mathrm{sym}_k(\fq/\fq^2) \arrow{r}{\cong}\arrow{d}{\cong} & \gr_\fq(Q)\arrow{d}\\
\mathrm{sym}_k(\fm/\fm^2) \arrow{r} & \gr_\fm(R)
\end{tikzcd}.
\]
We fix a free cover $\widetilde{f}\colon \widetilde{F}\to Q$ of $\fq$; then $R\otimes_Q \widetilde{f}$ is a free cover of $\fm$. We can assume $f= R\otimes_Q\widetilde{f}$, so that $F = \widetilde{F}/I\widetilde{F}$. Let $\varphi\colon K(\widetilde{f})\to K(f)$ be the induced surjective map of dg algebras. The kernel of this map is $IK(\widetilde{f})$. 

Let $s$ be the order of $R$. It is easy to verify that $s=\sup\{j \mid I\subseteq \fq^j\}$; in particular, $I$ can be generated by elements $f_1,\dots,f_c$ with $f_i\in \fq^s$. From \cref{rem:degree-one-cycles} we get that the $k$-vector space $H_1(f)$ is spanned by cycles 
\[
z_i = \sum_j a_{ij}e_j \quad\text{for $1\le i\le c$},
\]
with $a_{ij} \in \fq^{s-1}$. Thus the $z_i$ are in $J^s$, where $J$ is the augmentation ideal of $K(f)$; see \cref{de:serre-filtration}. We conclude that $\filt{s}{H_1(f)}=H_1(f)$. 

It remains to show that if $\filt{l}{H_1(f)} = H_1(f)$ for some integer $l\ge 1$, then $I\subseteq \fq^l$. Given such an $l$, there must be cycles $w_i$ in $\fm^{l-1}K_1(f)$ that are homologous to the $z_i$. Since $\varphi$ is surjective, with kernel $IK(\widetilde{f})$, we can find elements $\widetilde{z}_i, \widetilde{w}_i$ in $K_1(\widetilde{f})$, with $\widetilde{w}_i$ in $\fq^{l-1}K_1(\widetilde{f})$, and $u_i$ in $K_2(\widetilde{f})$ such that for each $i$ one has
\[
\varphi(\widetilde{z}_i) = z_i\,, \quad \varphi(\widetilde{w}_i) = w_i\,, \quad\text{and}\quad du_i = \widetilde{z}_i-\widetilde{w}_i \quad \text{modulo $I K(\widetilde{f})$.}
\]
Moreover we can ensure that $d(\widetilde{z}_i) = f_i$. Since $d(K(\widetilde{f}))\subseteq \fq K(\widetilde{f})$ we deduce that $d(\widetilde{w}_i)\subseteq \fq^l K_0(\widetilde{f})=\fq^l$. Thus the relations above yield 
\[
f_i = d(\widetilde{z}_i) = d(\widetilde{w}_i) \in \fq^l Q\quad\text{for $1\le i\le c$.}
\]
This is the desired conclusion.
\end{proof}

In the light of \cref{le:ord}, in the result below one can take $s\coloneqq \ord(S)$ for the free cover $g\colon G\to S$, independent of the homomorphism $\delta$.

\begin{lemma} \label{lem:lemmaWithsAndFiltDegrees}
Fix $(\phi,\alpha)\colon f\to g$ in $\FreeCovers$, let $\delta\in \hom_\alpha(F,\ker g)$ and let $s\geq 0$ such that $[\delta(x)]\in \filt{s} H_1(g)$ for all $x\in F$. Then $H(\phi+\delta,\alpha)-H(\phi,\alpha)$ increases filtration degree by $s-1$, i.e., restricts to a map $\filt{l}H(f)\to \filt{l+s-1}H(g)$ for all $l\geq 0$. 
\end{lemma}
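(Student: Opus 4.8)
The plan is to combine the refined version of \cref{lem:conntomult} recorded in the remark immediately following it with the multiplicativity of the filtration. As in the proof of \cref{lem:conntomult}, I would first reduce to the case where $\delta$ is supported on a single basis vector: fix a basis $e_1,\dots,e_n$ of $F$, write $\delta = \sum_i \delta_i$ where $\delta_i$ agrees with $\delta$ on $e_i$ and vanishes on the other basis vectors, and telescope
\[
H(\phi+\delta,\alpha) - H(\phi,\alpha) = \sum_{i=1}^n \bigl(H(\phi_{i}+\delta_i,\alpha) - H(\phi_i,\alpha)\bigr),
\]
where $\phi_i = \phi + \delta_1 + \dots + \delta_{i-1}$; each $\phi_i$ is again a lift of $\alpha$ and $[\delta_i(x)] \in \filt{s}H_1(g)$ still holds for all $x$. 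So it suffices to prove the claim when $\delta$ is supported on one basis vector $e_i$.

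In that case the remark after \cref{lem:conntomult} gives, for a class $z=[z']\in H_m(f)$ with cycle representative $z' = e_i r + r'$ (and $r, r'\in L$, $dr = 0$) the exact formula
\[
H_m(\phi+\delta,\alpha)(z) - H_m(\phi,\alpha)(z) = [\delta(e_i)]\cdot H_{m-1}(\phi,\alpha)([r]).
\]
Now suppose $z\in \filt{l}H_m(f)$. The key point is that I may choose the cycle representative $z'$ to lie in $\filt{l}K_m(f) = \fm^{l-m}K_m(f)$; then in the decomposition $K(f) = e_i\cdot L \oplus L$ both summands $e_i r$ and $r'$ inherit this, so $r \in \fm^{l-m}L_{m-1} \subseteq \filt{l-1}K_{m-1}(f)$, whence $[r]\in\filt{l-1}H_{m-1}(f)$. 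Applying $H_{m-1}(\phi,\alpha)$, which is filtration-preserving because $K(\phi,\alpha)$ is a map of filtered complexes, gives $H_{m-1}(\phi,\alpha)([r]) \in \filt{l-1}H_{m-1}(g)$. Since $[\delta(e_i)]\in\filt{s}H_1(g)$ by hypothesis, multiplicativity of the filtration on $H(g)$ yields
\[
[\delta(e_i)]\cdot H_{m-1}(\phi,\alpha)([r]) \in \filt{s}H_1(g)\cdot \filt{l-1}H_{m-1}(g) \subseteq \filt{l+s-1}H(g),
\]
which is exactly the asserted conclusion.

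The one step that needs a little care — and which I expect to be the main obstacle — is justifying that a class in $\filt{l}H_m(f)$ admits a cycle representative inside $\filt{l}K_m(f)$ that can simultaneously be put in the normal form $e_i r + r'$ used above; but this is automatic, since $\filt{l}K_m(f) = \fm^{l-m}K_m(f)$ is a free $R$-submodule compatible with the direct sum decomposition $K_m(f) = e_i L_{m-1}\oplus L_m$ (the decomposition is induced by the exterior-algebra basis, and $\fm^{l-m}$ multiplies each summand into itself), so writing any such cycle as $e_i r + r'$ with $r\in\fm^{l-m}L_{m-1}$, $r'\in\fm^{l-m}L_m$ is immediate, and $dr=0$ follows as in \cref{lem:conntomult}. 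The remaining inputs — that $K(\phi,\alpha)$ preserves the filtration and that the filtration on $H(g)$ is multiplicative — are recorded in \cref{de:serre-filtration} and the functoriality remark. Finally, the identification $s = \ord(S)$ as a uniform choice valid for every $\delta$ follows from \cref{le:ord}, as noted in the sentence preceding the statement.
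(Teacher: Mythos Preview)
Your proof is correct and follows essentially the same route as the paper: reduce to $\delta$ supported on a single basis vector, invoke the explicit product formula from the proof of \cref{lem:conntomult}, observe that a representative in $\fm^{l-m}K_m(f)$ decomposes as $e_i r + r'$ with $r\in\fm^{l-m}L_{m-1}=\filt{l-1}L_{m-1}$, and conclude by multiplicativity of the filtration. Your treatment is in fact a bit more explicit than the paper's (you spell out the telescoping and the compatibility of the decomposition $e_iL\oplus L$ with $\fm^{l-m}K_m(f)$), but there is no substantive difference.
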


\begin{proof}
We choose a basis $e_1,\ldots,e_n$ of $F$. Without loss of generality, we can assume that $\delta(e_i)=0$ for all $i\ge 2$.

Let $z$ be a cycle whose homology class is in $\filt{l} H_i(f)$. It suffices to show that the difference $H_i(\phi+\delta,\alpha)(z)-H_i(\phi,\alpha)(z)$ lies in $\filt{l+s-1}H_i(g)$. Choose a representative of $z$ in $\filt{l}K_i(f)=\fm^{l-i}K_i(f)$ and write it in the form $e_1\cdot r+r'$, where $r,r'$ are in the subalgebra $L\subseteq K(f)$ generated by all $e_j$ with $j\neq 1$.
We have shown in the proof of \cref{lem:conntomult} that 
\[H_i(\phi+\delta,\alpha)(z)-H_i(\phi,\alpha)(z) = [\delta(e_1)]\cdot H_{i-1}(\phi,\alpha)([r])\]
From the direct sum decomposition $e_1\cdot L\oplus L$ it follows that $r\in \fm^{l-i}L_{i-1}=\filt{l-1}L_{i-1}$.
Thus one has $K_{i-1}(\phi,\alpha)(r)\in \filt{l-1}K_i(g)$. By assumption on $s$, we know that $[\delta(e_1)]\in \filt{s}H_1(g)$ and since the multiplication is compatible with the filtration, it follows that $[\delta(e_1)]\cdot H_{i-1}(\phi,\alpha)([r])\in \filt{l+s-1}H_{i}(g)$ as desired.
\end{proof}

\begin{corollary}\label{cor:filtrationargument}
If additionally $\filt{l}H_i(f)=H_i(f)$ and $\filt{l+s-1}H_i(g)=0$ in some degree $i\geq 0$, then 
$H_i(\phi+\delta,\alpha)=H_i(\phi,\alpha)$. 
\end{corollary}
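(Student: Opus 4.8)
The plan is to read this off directly from \cref{lem:lemmaWithsAndFiltDegrees}. Consider the additive map
\[
D \coloneqq H_i(\phi+\delta,\alpha) - H_i(\phi,\alpha)\colon H_i(f)\longrightarrow H_i(g)\,.
\]
By \cref{lem:lemmaWithsAndFiltDegrees} (whose standing hypothesis $[\delta(x)]\in\filt{s}H_1(g)$ for all $x\in F$ we are assuming), $D$ carries $\filt{l}H_i(f)$ into $\filt{l+s-1}H_i(g)$. Now invoke the two extra hypotheses in the stated degree $i$: the first, $\filt{l}H_i(f)=H_i(f)$, says that the whole domain of $D$ is already this filtration level, so the image of $D$ is contained in $\filt{l+s-1}H_i(g)$; the second, $\filt{l+s-1}H_i(g)=0$, forces $D=0$. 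That is precisely the assertion $H_i(\phi+\delta,\alpha)=H_i(\phi,\alpha)$.

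There is essentially no obstacle: all the work has been done in \cref{lem:lemmaWithsAndFiltDegrees}, and the only points worth a sentence are (i) that $D$ is additive, so that the phrasing ``restricts to a map $\filt{l}H(f)\to\filt{l+s-1}H(g)$'' genuinely controls $D$ on all of $H_i(f)=\filt{l}H_i(f)$; and (ii) the sign conventions, i.e.\ that one is indeed looking at the \emph{difference} of the two induced maps, which is what \cref{lem:lemmaWithsAndFiltDegrees} bounds.

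Finally I would note the intended use: taking $s\coloneqq\ord(S)$, \cref{le:ord} gives $\filt{\ord(S)}H_1(g)=H_1(g)$, so the hypothesis on $\delta$ holds for \emph{every} $\delta\in\hom_\alpha(F,\ker g)$; since by \cref{rem:actiononlifts} any two lifts of $\alpha$ differ by such a $\delta$, the corollary then upgrades to the statement that $H_i(\phi,\alpha)$ is independent of the lift $\phi$ whenever $\filt{l}H_i(f)=H_i(f)$ and $\filt{\,l+\ord(S)-1}H_i(g)=0$ for some $l\ge 0$.
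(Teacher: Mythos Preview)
Your proof is correct and is essentially identical to the paper's own argument: apply \cref{lem:lemmaWithsAndFiltDegrees} to see that the difference map lands in $\filt{l+s-1}H_i(g)=0$, using $\filt{l}H_i(f)=H_i(f)$ to cover the whole domain. The paper's proof is just a one-sentence version of what you wrote, and your added remark on taking $s=\ord(S)$ matches the paper's intended application.
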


\begin{proof}
The difference between $H_i(\phi+\delta,\alpha)$ and $H_i(\phi,\alpha)$ is an element of 
$\filt{l+s-1}H_i(g)$ and thus zero by assumption.
\end{proof}

\begin{example}\label{ex:filtrationCriterion}
Consider the $\FF_2$-algebra
\[
R\coloneqq \frac{\FF_2[x,y,z]}{(x^{98},y^{99},z^{100}, (x^{50} + y^{50}) z^{51})}
\]
and $f\colon R^3\to R$ the obvious free cover. Macaulay2 says that the Betti table of $R$ is
\begin{table}[H]
  \centering
  \begin{tabular}{r|c c c c c}
     & 0 &1 &2 & 3&\\
     \hline
     0: & 1 & - &- &- &\\
     97: & - & 1 & - &-& \\
     98: & - & 1 & - &- &\\
     99: & - & 1 & - &- &\\
     100: & - & 1 & - &- &\\
     148: & - & - & 1 & -&\\
     149: & - & - & 1 & -&\\
     195: & - & - & 1 & -&\\
     196: & - & - & 2 & -&\\
     197: & - & - & 1 & 1&\\
     244: & - & - & - & 1&\\
     245: & - & - & - & 1 &
  \end{tabular}
\end{table}
It follows that $\filt{150}H_2(f)=H_2(f)$ and $\filt{199}H_2(f)=0$. Since $\filt{98}H_1(f)=H_1(f)$, we apply \cref{cor:filtrationargument} with $p=98$ to conclude that $H_2(\phi,\alpha)$ is independent of the chosen lift $\phi$. 
\end{example}

\begin{corollary}\label{cor:graded}
The map
$\gr H(\phi,\alpha)$ is independent of the chosen lift $\phi$.
\end{corollary}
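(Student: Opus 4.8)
The plan is to compare two different lifts of $\alpha$ and check that they induce the same map after passing to associated graded. Fix one lift $\phi\colon F\to G$ of $\alpha$. By \cref{rem:actiononlifts} every other lift of $\alpha$ is of the form $\phi+\delta$ for a unique $\delta\in\hom_\alpha(F,\ker g)$, and conversely every such $\phi+\delta$ is a lift; so it suffices to prove $\gr H(\phi+\delta,\alpha)=\gr H(\phi,\alpha)$ for each such $\delta$.

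First I would pin down the filtration level of the classes $[\delta(x)]$, $x\in F$. Since $\delta(x)\in\ker g\subseteq\fn G$, and $\fn G=\fn K_1(g)=\filt{2}K_1(g)$ in the Serre filtration convention $\filt{l}K_i(g)=\fn^{l-i}K_i(g)$ of \cref{de:serre-filtration}, and since $\delta(x)$ is a cycle, its class $[\delta(x)]$ lies in $\filt{2}H_1(g)$. (One could instead invoke \cref{le:ord} to place it in $\filt{\ord(S)}H_1(g)$, but $s=2$ already suffices.) Then I would apply \cref{lem:lemmaWithsAndFiltDegrees} with $s=2$: it yields that the difference $H(\phi+\delta,\alpha)-H(\phi,\alpha)$ restricts to a map $\filt{l}H(f)\to\filt{l+1}H(g)$ for every $l\ge 0$, i.e.\ it strictly increases filtration degree.

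Finally I would invoke the general principle that a filtered map of filtered modules which strictly raises filtration degree induces the zero map on associated gradeds; applied to the difference above, this shows that $H(\phi+\delta,\alpha)$ and $H(\phi,\alpha)$ induce the same map $\gr H(f)\to\gr H(g)$, which is exactly the assertion $\gr H(\phi+\delta,\alpha)=\gr H(\phi,\alpha)$. I do not expect a genuine obstacle here: the only points that require a little care are matching the two filtration conventions — the module-level one $\filt{l}K_i=\fn^{l-i}K_i$ against the homology-level one defined by images — so that ``$\ker g\subseteq\fn G$'' really does translate into ``filtration degree at least $2$'', and recalling that the associated graded functor is insensitive to maps that strictly raise filtration.
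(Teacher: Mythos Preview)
Your proposal is correct and follows essentially the same route as the paper: apply \cref{lem:lemmaWithsAndFiltDegrees} with $s=2$ to see that the difference $H(\phi+\delta,\alpha)-H(\phi,\alpha)$ strictly raises filtration degree, hence vanishes on the associated graded. Your justification of $s=2$ directly from $\ker g\subseteq\fn G=\filt{2}K_1(g)$ is slightly more explicit than the paper's, which instead invokes that the order of the (non-regular) ring is at least~$2$; both arrive at the same input for \cref{lem:lemmaWithsAndFiltDegrees}.
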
 

\begin{proof}
We can assume $R$ is not regular, and then its order is at least $2$, so we can choose $s=2$ in \cref{lem:lemmaWithsAndFiltDegrees}. This gives the desired result.
\end{proof}

\begin{example}\label{ex:filtrationargument}
Consider the $\FF_2$-algebra
\[
R\coloneqq \frac{\FF_2[x,y,z,w]}{(x^2, y^2, z^2, w^2, yz-xw)}
\]
and $f$ the free cover of the maximal ideal given by the matrix $(x\;y\;z\;w)$. According to Macaulay2, the graded algebra $H(f)$ has 31 generators: 
\begin{align*}
 \text{degree } 1&:& &x e_1, y e_2, z e_3, w e_1 + z e_2, w e_4& \\
 \text{degree } 2&:& &x w e_1 e_2, z w e_1 e_2, x w e_1 e_3, z w e_2 e_3, y w e_2 e_3&\\
 \text{degree } 3&:& &z w e_1 e_2 e_3, y w e_1 e_2 e_3, x w e_1 e_2 e_3, x z e_1 e_2 e_3, x y e_1 e_2 e_3, zwe_1e_2e_4 &\\
 &&& y w e_1 e_2 e_4, x w e_1 e_2 e_4, x z e_1 e_2 e_4, x y e_1 e_2 e_4, z w e_1 e_3 e_4, y w e_1 e_3 e_4, &\\
 &&&  x w e_1 e_3 e_4, x z e_1 e_3 e_4, z w e_2 e_3 e_4, y w e_2 e_3 e_4& \\
 \text{degree } 4&:&& z w e_1 e_2 e_3 e_4, y w e_1 e_2 e_3 e_4, x w e_1 e_2 e_3 e_4, x z e_1 e_2 e_3 e_4, x y e_1 e_2 e_3 e_4&
\end{align*}
The multiplication $H_1\cdot H_i$ is trivial for $i\geq 2$, but $H_1\cdot H_1\neq 0$.

Note that $H_2(f)=\filt{4}H_2(f)$, and $\filt{5}H_2(f)=0$ since $\fm^3=0$. Thus $H_2(f)=\filt{4}H_2(f)/\filt{5}H_2(f)$ and it follows that every dg algebra automorphism of $K(f)$ induces the identity on $H_2(f)$ by \cref{cor:graded}. 
\end{example}

\section{Induced automorphisms of Koszul homology}\label{sec:inducedAutomorphismsKoszulHomology}

We have seen that in many cases the map in Koszul homology induced by some lift of a ring homomorphism is independent of the chosen lift. However, we also have seen some counterexamples. The images of one of the algebra generators under two different lifts just differ by a cycle of the Koszul complex. Thus it seems natural to ask whether the map on homology only depends on the homology classes of those cycles. This will be shown in \cref{prop:bdydontchangehomology}. This has some surprising consequences for the group of induced homology automorphisms; see \cref{thm:isgrouphomomorphism}.

Another consequence of \cref{prop:bdydontchangehomology} is that it reduces the cases that are required to check for finding out if the induced map $H(\phi,\alpha)$ is independent of the chosen lift $\phi$ of $\alpha$. 
\begin{proposition}
\label{prop:bdydontchangehomology}
Let $(\phi,\alpha)$ be a morphism from $f\colon F\to R$ to $g\colon G\to S$ in $\FreeCovers$. If $\delta\in \hom_\alpha(F,B)$, where $B\subseteq \ker(g)$ are the boundaries in the Koszul complex, then $K(\phi+\delta,\alpha)$ and $K(\phi,\alpha)$ are homotopic as maps of chain complexes and thus 
\[
H(\phi+\delta,\alpha)=H(\phi,\alpha)\,.
\]
\end{proposition}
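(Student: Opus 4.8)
The plan is to construct an explicit chain homotopy between $K(\phi+\delta,\alpha)$ and $K(\phi,\alpha)$. First I would set up notation: choose a basis $e_1,\dots,e_n$ of $F$, and by the action of $\hom_\alpha(F,B)$ on lifts (as in \cref{rem:actiononlifts}) together with linearity in $\delta$, reduce to the case where $\delta(e_j)=0$ for all $j\neq 1$ and $\delta(e_1)=b$ for some boundary $b\in B$, say $b=g'(c)$ where $g'$ is the differential of $K(g)$ in degree two, i.e.\ $b=dc$ for a chosen $c\in K_2(g)$. This is the key flexibility: because $b$ is a boundary we have a witness $c$ to work with.

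Next, mimicking the decomposition used in the proof of \cref{lem:conntomult}, write $K(f)=e_1\cdot L\oplus L$, where $L$ is the (differential-closed) exterior subalgebra on $e_2,\dots,e_n$. On $L$ the two maps $K(\phi+\delta,\alpha)$ and $K(\phi,\alpha)$ agree, so the homotopy should be zero there; the only place they differ is on $e_1\cdot L$. The natural guess for the homotopy $h\colon K(f)\to K(g)$ is $h(r')=0$ and $h(e_1\cdot r)=\pm\, c\cdot K(\phi,\alpha)(r)$ for $r\in L$ (with an appropriate sign depending on degree conventions for the Leibniz rule). Then I would verify the homotopy identity $dh+hd=K(\phi+\delta,\alpha)-K(\phi,\alpha)$ by a direct computation on elements of the form $e_1\cdot r$ and $r'$: on $r'\in L$ both sides vanish since $h$ kills $L$ and $dr'\in L$; on $e_1\cdot r$ one expands $d(e_1\cdot r)=d(e_1)\cdot r - e_1\cdot d(r)$ (so that $hd(e_1 r)=\pm c\cdot K(\phi,\alpha)(d(e_1)\text{-part})\mp c\cdot K(\phi,\alpha)(dr)$, using that $d(e_1)\in L_0=R$ hence lies in $L$), and $dh(e_1\cdot r)=d(c\cdot K(\phi,\alpha)(r))=(dc)\cdot K(\phi,\alpha)(r)\pm c\cdot d(K(\phi,\alpha)(r))=b\cdot K(\phi,\alpha)(r)\pm c\cdot K(\phi,\alpha)(dr)$. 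The terms involving $c\cdot K(\phi,\alpha)(dr)$ cancel, and what survives is exactly $b\cdot K(\phi,\alpha)(r)=\delta(e_1)\cdot K(\phi,\alpha)(r)$, which is precisely $(K(\phi+\delta,\alpha)-K(\phi,\alpha))(e_1\cdot r)$ as computed in the proof of \cref{lem:conntomult}.

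The main obstacle I anticipate is purely bookkeeping: getting all the Koszul signs right in the graded Leibniz rule $d(c\cdot y)=(dc)\cdot y + (-1)^{|c|}c\cdot dy$ and in $d(e_1\cdot r)=(de_1)\cdot r + (-1)^{|e_1|}e_1\cdot (dr)$, and making sure the sign chosen in the definition of $h$ on $e_1\cdot L$ makes the cross-terms cancel uniformly across all degrees. One must also check that $h$ is well-defined and $\alpha$-equivariant — well-definedness is immediate from the direct sum decomposition, and equivariance follows since $c$ is a fixed element of $K(g)$ and $K(\phi,\alpha)$ is $\alpha$-equivariant. Once the homotopy identity is verified, the conclusion $H(\phi+\delta,\alpha)=H(\phi,\alpha)$ is automatic, and the general $\delta$ (supported on several basis vectors, with values arbitrary boundaries) follows by writing $\delta=\sum_i \delta_i$ with each $\delta_i$ supported on a single $e_i$ and composing the successive homotopies — or equivalently, observing that the difference of the induced maps on homology is additive in $\delta$ by \cref{lem:conntomult} and each summand vanishes.
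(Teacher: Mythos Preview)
Your proposal is correct and follows essentially the same approach as the paper: reduce to $\delta$ supported on a single basis vector, pick a preimage $c\in K_2(g)$ of the boundary $\delta(e_1)$, and build the chain homotopy by inserting $c$ in place of $\phi(e_1)$. The only difference is packaging: the paper writes the homotopy directly on exterior monomials with an explicit sign $(-1)^{l-1}$, whereas you organize the same map via the splitting $K(f)=e_1\cdot L\oplus L$ and the formula $h(e_1\cdot r)=c\cdot K(\phi,\alpha)(r)$; since $c$ has degree two these two descriptions coincide, and your sign check (with the $+$ sign) goes through exactly as you outline.
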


\begin{proof} 
Let $e_1,\ldots,e_n$ be a basis of $F=K_1(f)$. We may assume without loss of generality that $\delta$ sends all basis vectors but $e_i$ to zero. 

By assumption on $\delta$, there exists a chain $s\in K_2(g)$ such that
\[
\delta(e_i) = ds
.\] Let $h\colon K_*(f)\to K_{*+1}(g)$ be the map of degree one sending a basis element $e_{i_1}\wedge \ldots \wedge e_{i_m}$ of $K_m(f)$ to $0$ if $i_l\neq i$ for all $1\leq l\leq m$ and to
\[
 (-1)^{l-1}\phi(e_{i_1})\wedge \ldots \wedge \phi(e_{i_{l-1}}) \wedge s \wedge \phi(e_{i_{l+1}}) \wedge \ldots \wedge \phi(e_{i_m})
\]
if $i_l=i$ for some $l$. We will show that $h$ defines a chain homotopy between $K(\phi+\delta,\alpha)$ and $K(\phi,\alpha)$. The difference
$(K(\phi+\delta,\alpha)-K(\phi,\alpha))(e_{i_1}\wedge \ldots \wedge e_{i_m})$ ,i.e,
\[ (\phi+\delta)(e_{i_1})\wedge\ldots \wedge (\phi+\delta)(e_{i_m}) - \phi(e_{i_1})\wedge\ldots \wedge \phi(e_{i_m})\]
is zero if $i_l\neq i$ for all $l$ by assumption on $\delta$, and hence equals $(dh+hd)(e_{i_1}\wedge \ldots \wedge e_{i_m})$ in this case as desired. If $i_l=i$ for some $l$, then the difference above equals
\[\phi(e_{i_1})\wedge \ldots \wedge \phi(e_{i_{l-1}}) \wedge ds \wedge \phi(e_{i_{l+1}}) \wedge \ldots \wedge \phi(e_{i_m})
\]
by assumption on $\delta$ which by a straightforward computation agrees again with
$(dh+hd)(e_{i_1}\wedge \ldots \wedge e_{i_m})$.

Thus $K(\phi+\delta,\alpha)$ and $K(\phi,\alpha)$ are chain homotopic via $h$.
\end{proof}
This result is a first step for checking with a computer program. If $e_1,\ldots, e_n$ is a basis of $F$, then an arbitrary lift of $\alpha$ sends a basis element $e_i$ to $\phi(e_i)+\delta_i$ for some $\delta_i\in \ker g$. Thus a priori, the program needs to run through all tuples $(\delta_1,\ldots,\delta_n)$ of elements of $\ker g$ to check if the induced map in homology is independent of the lift $\phi$. By \cref{prop:bdydontchangehomology}, it suffices to choose a representative of each homology class in $H_1(g)$ and run through $n$-tuples of such representatives.

We will use the following lemma to further reduce the number of cases that need to be checked and to establish structural properties of the group of automorphisms in Koszul homology that are induced by automorphisms of the Koszul dga.

\begin{lemma}\label{lem:composablemorphisms} Let two composable morphisms in $\FreeCovers$ be given
\[
\begin{tikzcd}
F \arrow{d}[swap]{f} \arrow{r}{\phi} & F'\arrow{d}{f'}\arrow{r}{\phi'}&F''\arrow{d}{f''}\\
R \arrow{r}[swap]{\alpha} & R' \arrow{r}[swap]{\alpha'}&R''
\end{tikzcd}.
\]
 Different lifts are given by 
$\phi+\delta$ for $\delta\in \hom_\alpha(F,\ker(f'))$ and by $\phi'+\delta'$ for $\delta'\in \hom_{\alpha'}(F',\ker(f''))$. Then the following maps are homotpic:
\[
(\phi'+\delta')\circ(\phi+\delta) \qquad\text{and}\qquad \phi'\circ \phi +\phi'\circ \delta+\delta'\circ \phi\,.
\]
\end{lemma}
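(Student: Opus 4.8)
The statement compares two lifts of the composite $\alpha'\circ\alpha$: the "honest" composite $(\phi'+\delta')\circ(\phi+\delta)$ and the "linearized" sum $\phi'\circ\phi+\phi'\circ\delta+\delta'\circ\phi$. Both are maps $F\to F''$ over the local homomorphism $\alpha'\circ\alpha$, so by fully faithfulness of $K$ they extend to morphisms of dg algebras $K(f)\to K(f'')$; the claim is that these two morphisms of chain complexes are homotopic. The plan is to expand the honest composite, identify the single offending term, and recognize it as a boundary so that \cref{prop:bdydontchangehomology} applies.

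First I would expand
\[
(\phi'+\delta')\circ(\phi+\delta)=\phi'\circ\phi+\phi'\circ\delta+\delta'\circ\phi+\delta'\circ\delta.
\]
So the two maps in the statement differ exactly by the term $\delta'\circ\delta\colon F\to F''$. The next step is to check that $\delta'\circ\delta$ is an $(\alpha'\circ\alpha)$-equivariant map landing in the boundaries $B\subseteq\ker(f'')$ of the Koszul complex $K(f'')$. Equivariance is routine: $\delta$ is $\alpha$-equivariant and $\delta'$ is $\alpha'$-equivariant, so the composite is $(\alpha'\circ\alpha)$-equivariant. For the image: $\delta(x)\in\ker(f')=$ the degree-one cycles of $K(f')$, and these cycles lie in $\fm' K_1(f')$ since $\ker(f')\subseteq\fm'F'$ (the defining condition of a free cover). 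Hence $\delta(x)=\sum_j r'_j x'_j$ with $r'_j\in\fm'$. Then $\delta'(\delta(x))=\sum_j \alpha'(r'_j)\,\delta'(x'_j)$, and each $\delta'(x'_j)$ is a cycle in $K_1(f'')$ while $\alpha'(r'_j)\in\fn''$ (the maximal ideal of $R''$, using that $\alpha'$ is local). Since $\fn''$ annihilates $H(f'')$ by \cite[Proposition~1.6.5(b)]{brunsherzog1998}, each $\alpha'(r'_j)\,\delta'(x'_j)$ is a boundary, and therefore so is $\delta'(\delta(x))$. Thus $\delta'\circ\delta\in\hom_{\alpha'\circ\alpha}(F,B)$.

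Now I would conclude. Write $\psi\coloneqq\phi'\circ\phi+\phi'\circ\delta+\delta'\circ\phi$, which is a lift of $\alpha'\circ\alpha$ (it agrees modulo $\ker f''$ with $\phi'\circ\phi$, hence its composition with $f''$ is $\alpha'\circ\alpha\circ f$). Then $(\phi'+\delta')\circ(\phi+\delta)=\psi+(\delta'\circ\delta)$ with $\delta'\circ\delta\in\hom_{\alpha'\circ\alpha}(F,B)$, so \cref{prop:bdydontchangehomology} applied to the morphism $(\psi,\alpha'\circ\alpha)$ and the perturbation $\delta'\circ\delta$ gives that $K(\psi+\delta'\circ\delta,\alpha'\circ\alpha)$ and $K(\psi,\alpha'\circ\alpha)$ are homotopic as maps of chain complexes. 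Since $K$ is a functor, $K((\phi'+\delta')\circ(\phi+\delta),\alpha'\circ\alpha)=K(\phi'+\delta',\alpha')\circ K(\phi+\delta,\alpha)$, which is the composite appearing in the statement; so the two dg-algebra maps are chain homotopic, as claimed.

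**Main obstacle.** There is no real obstacle here — the lemma is essentially bookkeeping once one has \cref{prop:bdydontchangehomology} in hand. The only point requiring a little care is confirming that $\delta'\circ\delta$ actually lands in the \emph{boundaries}, not merely in $\ker(f'')$; this is exactly where the free-cover condition $\ker(f')\subseteq\fm'F'$ and the fact that the maximal ideal kills Koszul homology are used, mirroring the argument in the proof of \cref{lem:H1}. One should also be slightly careful that "homotopic as maps of chain complexes" is the right conclusion to state (rather than just "induce the same map on homology"), but that is precisely the strength of \cref{prop:bdydontchangehomology}, so it carries over verbatim.
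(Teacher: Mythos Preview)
Your proof is correct and follows essentially the same approach as the paper: identify the difference of the two lifts as $\delta'\circ\delta$, use the free-cover condition $\ker(f')\subseteq\fm'F'$ together with $\fm''\cdot H(f'')=0$ to see that $\delta'\circ\delta$ lands in the boundaries, and then invoke \cref{prop:bdydontchangehomology}. Your write-up is slightly more detailed (e.g.\ spelling out equivariance and the functoriality of $K$), but the argument is the same.
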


\begin{proof}
We denote the maximal ideals of $R$,$R'$ and $R''$ by $\fm,\fm'$ and $\fm''$, respectively. The difference of the two maps in question is $\delta'\circ \delta$. As $\im(\delta)\subseteq \ker(f')$ and $\ker(f')\subseteq \fm'F'$, it follows that
\[\im(\delta'\circ \delta) \subseteq
\delta'(\ker(f'))\subseteq 
\delta'(\fm'F')\subseteq \alpha'(\fm') \delta'(F')
\subseteq \mathfrak{m''} \ker(f'').
\]
Since $\fm''\cdot H(f'')=0$, all elements in $\fm'' \ker(f'')$ are boundaries. Thus \cref{prop:bdydontchangehomology} gives the desired result.
\end{proof}

We are interested in the group $\aut(K(f))$ of automorphisms of the dga $K(f)$ and the induced automorphisms on Koszul homology. There is a bijection
\[\hom_{\id_R}(F,\ker(f))\rightarrow \aut(K(f)), \quad c\mapsto K(\id_F+\delta,\id_R).\] The source is an abelian group under addition and the target is a group under composition. In general this map is not a group homomorphism. 
Nevertheless, we will show that it is compatible up to homotopy and hence a group homomorphism after passing to homology.
\begin{theorem}\label{thm:isgrouphomomorphism}
The map
\[\hom_{\id_R}(F,H_1(f))\rightarrow \aut(H(f)), \quad [\delta]\mapsto H(\id_F+\delta,\id_R),
\]
is a group homomorphism. Thus any two automorphisms in the image commute. Therefore when the residue field of $R$ is of positive characteristic $p$ any nontrivial automorphism in the image has order $p$.
\end{theorem}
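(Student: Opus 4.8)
The plan is to first reduce the statement to checking that the assignment $[\delta]\mapsto H(\id_F+\delta,\id_R)$ is well-defined on homology classes, then to verify it is a group homomorphism, and finally to read off the structural consequences. That the map is well-defined — i.e.\ that it depends only on the class $[\delta]\in \hom_{\id_R}(F,H_1(f))$ and not on a choice of representative $\delta\in\hom_{\id_R}(F,\ker f)$ — is exactly \cref{prop:bdydontchangehomology}: adding to $\delta$ a homomorphism landing in the boundaries $B\subseteq\ker(f)$ changes $K(\id_F+\delta,\id_R)$ only by a chain homotopy, hence not at all on $H(f)$. So the map in the statement makes sense.

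Next I would prove it is a group homomorphism using \cref{lem:composablemorphisms}. Apply that lemma to the two composable morphisms given by $(\id_F,\id_R)\colon f\to f$ composed with itself, with the first lift perturbed by $\delta$ and the second by $\delta'$. The lemma gives that $(\id_F+\delta')\circ(\id_F+\delta)$ is homotopic to $\id_F\circ\id_F+\id_F\circ\delta+\delta'\circ\id_F=\id_F+(\delta+\delta')$, as maps of chain complexes. Passing to homology and using that homotopic maps induce the same map on $H(f)$, we obtain
\[
H(\id_F+\delta',\id_R)\circ H(\id_F+\delta,\id_R)=H(\id_F+(\delta+\delta'),\id_R)\,.
\]
Since the source $\hom_{\id_R}(F,H_1(f))$ is an abelian group under addition (being a group of $R$-linear-up-to-$\id_R$ maps into the abelian group $H_1(f)$), this displays the assignment as a homomorphism from an abelian group, so its image in $\aut(H(f))$ is an abelian subgroup; in particular any two automorphisms in the image commute.

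For the last sentence: when $k$ has positive characteristic $p$, the maximal ideal $\fm$ annihilates $H_1(f)$, so $H_1(f)$ is a $k$-vector space, hence $\hom_{\id_R}(F,H_1(f))\cong\Hom_k(F/\fm F,H_1(f))$ is a $k$-vector space and therefore a group of exponent $p$. A quotient of an exponent-$p$ abelian group again has exponent $p$, so every element of the image has order dividing $p$; being nontrivial, such an element has order exactly $p$. I do not anticipate a serious obstacle here — the whole argument is an assembly of \cref{prop:bdydontchangehomology} and \cref{lem:composablemorphisms}; the only point requiring a little care is confirming that the source is genuinely an abelian group (equivalently a $k$-vector space when $\operatorname{char}k=p$), for which one uses $\fm\cdot H_1(f)=0$ to see that an $\id_R$-equivariant map $F\to H_1(f)$ factors through $F/\fm F$.
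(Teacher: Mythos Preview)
Your proposal is correct and follows essentially the same route as the paper: well-definedness via \cref{prop:bdydontchangehomology}, the homomorphism property via \cref{lem:composablemorphisms} specialized to $\phi=\phi'=\id_F$ and $\alpha=\alpha'=\id_R$, and the exponent-$p$ claim from the fact that $\hom_{\id_R}(F,H_1(f))$ is a $k$-vector space. Your added justification that $\fm\cdot H_1(f)=0$ forces an $\id_R$-equivariant map $F\to H_1(f)$ to factor through $F/\fm F$ is a welcome elaboration of a point the paper leaves implicit.
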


\begin{proof}
The map in question is well-defined, since adding boundaries to the representative does not affect the map on the right-hand side by \cref{prop:bdydontchangehomology}. We apply \cref{lem:composablemorphisms} with $F=F'=F''$ and $R=R'=R''$ and $\phi=\phi'=\id$ and $\alpha=\alpha'=\id$. It follows that $(\id+\delta')\circ (\id+\delta)$ is homotopic to $(\id+\delta'+\delta)$. In particular, we get 
\[
H(\id+\delta',\id)\circ H(\id+\delta,\id)=H(\id+\delta'+\delta,\id)
\]
which shows that the map is a homomorphism of groups as desired. If the characteristic of $k=R/\fm$ is $p>0$, then any element in the abelian group $\hom_{\id_R}(F,H_1(f))$ is trivial or has order $p$, hence so does its image.
\end{proof}

\cref{thm:isgrouphomomorphism} has the following computational consequence.
\begin{remark}\label{rem:reducecasesforprogram}
If the residue field $k=R/\fm$ is finite and $\{z_l\}_l$ are cycles representing a basis for $H_1(f)$, then it suffices to check for a fixed basis $e_1,\ldots, e_n$ of the free cover of $f$ if for each fixed $i$ the maps
\[
  e_i\mapsto e_i + z_l, \quad e_j\mapsto e_j \text{ for }j\neq i,
\]
induce the identity on homology to conclude whether every automorphism of $K(f)$ induces the identity on homology.
\end{remark}

Another implication of \cref{thm:isgrouphomomorphism} is that every automorphism of $H(f)$ that is induced by an automorphism of the dga $K(f)$ has finite order if the characteristic of the ring $R$ is nonzero. Without such a constraint on $k$, there can be automorphisms of infinite order. Here is an example.

\begin{example}
\label{ex:smllWithQ}
Consider the ring 
\[
R\coloneqq \frac{\QQ[x,y,z]}{(x^2,xy,y^2,z^2)}\,.
\]
Consider the free cover $f\colon R^3\xrightarrow{(x\; y\;z)} R$ of the maximal ideal $(x,y,z)$ of $R$. The Betti table of $R$ is
\begin{table}[H]
  \centering
  \begin{tabular}{r|c c c c c} 
     & 0 &1 &2 & 3&\\
     \hline
     0: & 1 & - &- &- &\\
     1: & - & 4 & 2 &-& \\
     2: & - & - & 3 &2 &
  \end{tabular}
\end{table}
Fix $\lambda\in \QQ$ and consider the map $\phi \colon R^3\to R^3$ where
\[
e_1\mapsto e_1+\lambda ze_3
\]
and $e_2,e_3$ are fixed. In $H(f)$ consider the classes 
\[
z_1 = [xe_1], z_2=[ye_2], z_3 = [ye_1], z_4= [ze_3], z_5= [ye_1e_2], z_6=[xe_1e_2]
\]
From Macaulay2 we get that
\[
H(f)=\frac{\QQ[z_1,\ldots, z_6]}{(z_2 z_3, z_1 z_3, z_1 z_2, z_3 z_6, z_2 z_6, z_1 z_6, z_3 z_5, z_2 z_5, z_1 z_5, z_6^2, z_5 z_6, z_5^2)}.
\]
It is easy to check that 
\[
H(\phi,\id)(z_5)=[y\phi(e_1)e_2]=z_5+\lambda[zye_3e_2]=z_5+\lambda z_4z_2,
\]
and that $z_4z_2$ is not zero. Thus the map $\QQ\to \aut(H(f))$ that sends $\lambda$ to the automorphism above defines an embedding, giving automorphisms of infinite order.
\end{example}

\bibliographystyle{amsalpha}
\bibliography{bibl}
\end{document}